\newtheorem{definition}{Definition }[section]
\newtheorem{lemma}[definition]
{Lemma}
\newtheorem{theorem}[definition]
{Theorem }
\newtheorem{ex}[definition]
{Example }
\newtheorem{prop}[definition]{Proposition}
\newtheorem{rmk}[definition]{Remark}
\newcommand{\lgw}{\longrightarrow}
\newcommand{\ovl}{\overline}
\newcommand{\Frac}{\text{Frac}}
\newcommand{\Id}{\text{Id}}
\newcommand{\Ker}{\operatorname{Ker}}
\newcommand{\wdh}{\widehat}
\newcommand{\Ann}{\operatorname{Ann}}
\renewcommand{\l}{\lambda}
\renewcommand{\O}{\mathcal{O}}
\newcommand{\U}{\mathcal U}
\newcommand{\V}{\mathcal V}
\newcommand{\m}{\mathfrak{m}}
\newcommand{\R}{\mathbb{R}}
\newcommand{\K}{\mathbb{K}}
\newcommand{\N}{\mathbb{N}}
\newcommand{\D}{\Delta}
\newcommand{\I}{\text{I}}
\newcommand{\C}{\mathbb{C}}
\renewcommand{\a}{\alpha}
\renewcommand{\phi}{\varphi}
\renewcommand{\d}{\delta}
\renewcommand{\lg}{\langle}
\newcommand{\rg}{\rangle}
\newcommand{\at}{^o}
\newcommand{\sr}[1]%
{\ifmmode{}^\dagger\else${}^\dagger$\fi\ifvmode
\vbox to 0pt{\vss
 \hbox to 0pt{\hskip\hsize\hskip1em
 \vbox{\hsize3cm\raggedright\pretolerance10000
 \noindent #1\hfill}\hss}\vss}\else
 \vadjust{\vbox to0pt{\vss%
 \hbox to 0pt{\hskip\hsize\hskip1em%
 \vbox{\hsize3cm\raggedright\pretolerance10000%
 \noindent #1\hfill}\hss}\vss}}\fi%
}
\begin{document}
\title{Multiparameter perturbation theory of matrices and  linear operators}
\author{Adam Parusi\'nski}
\email{adam.parusinski@unice.fr}
\address{Universit\'e C\^ote d'Azur, Universit\'e Nice Sophia Antipolis, CNRS, LJAD, Parc
    Valrose, 06108 Nice Cedex 02, France}

\author{Guillaume Rond}
\email{guillaume.rond@univ-amu.fr}
\address{Aix Marseille Univ, LASOL, UMI2001, UNAM, Mexico}

\subjclass[2010]{Primary: 47A55, Secondary: 13F25, 14P20, 15A18, 26E10}

\thanks{Research partially supported by ANR project LISA (ANR-17-CE40-0023-03)}

\begin{abstract}
We show that a normal matrix $A$ with coefficients in $\C[[X]]$, $X=(X_1, \ldots, X_n)$, can be diagonalized, provided the discriminant $\Delta_A $ of its characteristic polynomial is a monomial times a unit.  The proof is an adaptation of our proof of the Abhyankar-Jung Theorem.
As a corollary we obtain the singular value decomposition for an arbitrary matrix $A$ with coefficient in $\C[[X]]$ under a similar assumption on $\Delta_{AA^*} $ and $\Delta_{A^*A} $. 

We also show real versions of these results, i.e. for coefficients in $\R[[X]]$, and deduce several results on multiparameter perturbation theory for normal matrices  with real analytic, quasi-analytic, or Nash coefficients.  
\end{abstract}


\maketitle

\noindent


\section{Introduction}   
The classical problem of perturbation theory of linear operators can be stated as follows. Given a family of linear operators or matrices depending on parameters, with what regularity can we parameterize the eigenvalues and the eigenvectors?

This problem was first considered for families depending on one parameter.  For the analytic dependence the classical results are due to Rellich \cite{relich37, relich42, relich69},  and Kato \cite{kato76}.  For instance, by \cite{kato76} the eigenvalues, eigenprojections, and eigennilpotents of a holomorphic curve of ($n \times n$)-matrices are holomorphic in a complement of a discrete set with at most algebraic singularities. By  \cite{relich42} the eigenvalues and eigenvectors of a real analytic curve of Hermitian matrices admit real analytic 
parametrization.  

More recently, the multiparameter case has been considered, first by Kurdyka and Paunescu 
\cite{KP} for real symmetric and antisymmetric matrices depending analytically on real parameters, and then for normal matrices by Rainer \cite{rainer2011},  \cite{rainer2013} depending again on real parameters.  
The main results of \cite{KP}, 
\cite{rainer2011} and  \cite{rainer2013} state that the eigenvalues and eigenspaces depend analytically on the parameters after blowings-up in the parameter space.  Note that for normal matrices this generalizes also the classical one-parameter case (there are no nontrivial blowings-up of one dimensional {}nonsingular space).  
For a review of both classical and more recent results see \cite{rainer2011} and \cite{rainertext}. 
\\

In this paper we show, in Theorem \ref{thm:normaltheorem}, that the families of normal matrices depending on a formal multiparameter can be diagonalized formally under a simple assumption that the discriminant of its characteristic polynomial (or the square-free form of the characteristic polynomial in general) equals a monomial times a unit. 
 Of course, by the resolution of singularities, one can make the discriminant normal crossings by blowings-up and thus recover easily the results of \cite{KP}, \cite{rainer2011}, and  \cite{rainer2013},  see Section \ref{sec:rectilinear}.  

 As a simple corollary of the main result we obtain in Section \ref{sec:SVD} similar results for the singular value decomposition of families of arbitrary, not necessarily normal, 
matrices. Again, by the resolution of singularities, we can make the discriminant of the family normal crossings by blowings-up.  This way we obtain a global version of the singular value decomposition theorem after blowings-up in both the real case and the complex one.

Our choice of the formal dependence on parameters is caused by the method of proof that is purely algebraic, but it implies analogous results for many Henselian subrings of the ring of formal power series, see Section \ref{sec:henselian}, in particular, for the analytic, quasi-analytic, and algebraic power series (i.e. Nash function germs). The assumption that the rings are Henselian can not be dropped, if we want to study the eigenvalues in terms of the coefficients of the matrix, or its characteristic polynomial, we need the Implicit Function Theorem. 

All these results are of local nature. In the last section we give a simple  example of a  global statement {of a family of matrices defined on an open set $U$ that can be diagonalized globally on $U$.  This is true under the assumption that the discriminant of its characteristic polynomial is locally normal crossings at every point of $U$ and that $U$ is simply connected (see Theorem \ref{global_statement})}.  
We do not know a fully satisfactory general global theorem and we would like to state it as an open problem.
\\

Another novelty of this paper is the method of proof.  
Recall that in \cite{KP} the authors  first reparameterize (by blowing-up) the parameter space in order to get the eigenvalues real analytic.  Then they  solve linear equations describing  the eigenspaces corresponding to irreducible factors of the characteristic polynomial.  This requires to resolve the ideal defined by all the minors of the associated matrices.  A similar approach is adapted in  \cite{rainer2011} and  \cite{rainer2013}.  First the eigenvalues are made analytic by blowings-up and then further blowings-up are necessary, for instance to make the coefficients of matrices and their differences normal crossing.

Our approach is different.  We adapt the algorithm of the proof of 
Abhyankar-Jung Theorem of \cite{PR}, and use a version of Hensel's Lemma to handle directly the matrices (and hence implicitly the eigenvalues and eigenspaces at the same time).  This simplifies the proof and avoids unnecessary blowings-up.  We note that we cannot deduce our result directly from the Abhyankar-Jung Theorem.  Indeed, even under the assumption that the discriminant of the characteristic polynomial is a monomial times a unit,  the Abhyankar-Jung Theorem 
implies only that its roots, that is the eigenvalues of the matrix, are fractional power series of the parameters, that is the power series with  positive rational exponents.

In a recent paper, Grandjean \cite{grandjean} shows results similar to these of 
\cite{KP}, \cite{rainer2011} and  \cite{rainer2013} but by a different approach.  Similarly to our strategy, he does 
not treat the eigenvalues first.  Otherwise his approach is quite different.  He 
considers the eigenspaces defined on the complement of the discriminant locus, denoted $D_A$, and constructs an ideal sheaf $\mathcal F_A$ with the following property.  If $\mathcal F_A$ is principal then the eigenspaces 
extend to $D_A$.  The construction of the ideal sheaf $\mathcal F_A$ is quite 
involved, we refer the reader to \cite{grandjean} for details.

 \subsection{Notation and conventions}
 For a commutative ring $R$ and positive integers $p$ and $q$, we denote by $Mat_{p,q}(R)$ the set of matrices with entries in $R$ with $p$ rows and $q$ columns. When $p$ and $q$ are equal to a same integer $d$, we denote this set by $Mat_d(R)$.

 Let $X=(X_1,\ldots, X_n)$ represent an $n$-tuple of  indeterminates. These indeterminates will be replaced by real variables in some cases. We denote by $\K[X]$ (resp. $\K[[X]]$, resp. $\K\{X\}$) the ring of polynomials (resp. formal power series, resp. convergent power series) in $X_1$, \ldots, $X_n$.

 We say that $f \in \C [[X]]$ is \emph{a monomial times unit} if $f = X^\alpha a (X) = X_1^{\alpha_1} \cdots X_n^{\alpha_n} a (X)$ with $a(0)\ne 0$. 

  For a matrix $A=A(X)\in Mat_d (\C [[X]])$, we denote by $A^*$ its adjoint, i.e. if the entries of $A(X)$ are the series
$$a_{i,j}(X)=\sum_{\a\in\N^n} a_{i,j,\a}X^\a$$
then $A^*(X)$ is the matrix whose entries are the $b_{i,j}(X)$ defined by
$$b_{i,j}(X)=\overline{a}_{j,i}(X)=\sum_{\a\in\N^n}\overline a_{j,i,\a}X^\a.$$

A matrix $A\in Mat_d(\C[[X]])$ is called \emph{normal} if $AA^*=A^*A$ and  \emph{unitary} if $AA^*=A^*A=\I_d$. The set of unitary matrices is denoted by 
$U_d(\C[[X]])$.

For a matrix $A\in Mat_d(\C[[X]])$, we denote by  $P_A (Z) = Z^d+ 
c_1 (X) Z^{d-1}+\cdots+c_d(X) $ its characteristic polynomial  and by 
  $\Delta_A \in \C [[X]]$ the first nonzero generalized discriminant of $P_A(Z)$. Let us recall that $\Delta_A$ equals
  $$\sum_{r_1<\cdots<r_l}\prod_{i<j; i,j\in\{r_1,\ldots, r_l\}} (\xi_i-\xi_j)^2$$
  where the $\xi_i$ are the roots of $P_A(Z)$ in an algebraic closure of  $\C(\!(X)\!)$ and $l$ is the number of such distinct roots. Since $\D_A$ is symmetric in the $\xi_i$ it is a polynomial in the $c_k$. Let us notice that
  \begin{equation}\label{disc}\Delta_A=\mu_1\ldots \mu_l \Delta_A'\end{equation}
  where the $\mu_i$ are the multiplicities of the distinct roots of $P_A$ and $\Delta_A'$ is the discriminant of  
  the reduced (i.e. square-free) 
  form $(P_A)_{red}$ of its characteristic polynomial. One can look at \cite[Appendix IV]{W} or \cite[Appendix B]{PP} for more properties of these generalized discriminants 
  or  subdiscriminants), 
  and to \cite{Roy} or \cite{BPRbook} for an effective way of computing them. \\

\section{Reduction of normal matrices} 
\subsection{A version of Hensel's Lemma for normal matrices}
We begin by stating and proving the main technical tool for 
the reduction of normal matrices. This result is a strengthened version of Cohn's version of Hensel's Lemma (see \cite[Lemma 1]{cohn}).

       \begin{lemma}\label{lem:SplitMat}
 Let $A(X)\in Mat_d(\C[[X]])$ be a normal matrix.      Assume that 
$
A(0) =   \begin{pmatrix}
  B\at _1 & 0 \\
  0 & B\at _2 
  \end{pmatrix}, 
$
 with $B\at _i \in Mat_{d_i} (\C)$, $d=d_1+d_2$,  and such that  the characteristic polynomials of $B\at _1$ and 
 $B\at _2$ are coprime.  \\
Then  there is a unitary matrix  $U \in U_d (\C[[X]])$, $U(0) = \I_d$, such that 
   \begin{align}\label{eq:equationforA}
U ^{-1} A U =   \begin{pmatrix}
B_1 & 0 \\
  0 &  B_2 
  \end{pmatrix}, 
  \end{align} 
  and $B_i (0) = B\at _i$, $i=1,2$.  
  \end{lemma}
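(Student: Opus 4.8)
The plan is to construct the unitary matrix $U$ by successive approximation, following the Newton-type iteration used in Cohn's version of Hensel's Lemma, but carrying along the normality/unitarity constraints at each step. Write $A = A(X)$ and decompose $A = A(0) + N$ where $N = N(X)$ has entries in the maximal ideal $\m = (X_1, \ldots, X_n)$. Since the characteristic polynomials of $B^o_1$ and $B^o_2$ are coprime, the Sylvester map $T \mapsto B^o_1 T - T B^o_2$ is an isomorphism on $Mat_{d_1, d_2}(\C)$, and similarly for the $(2,1)$-block; this is the standard linear-algebra fact underlying the splitting. I would look for $U$ of the form $U = (\I_d + H)(\I_d + H^*H)^{-1/2}$ or, more simply, build $U$ as a limit $U = \lim_k U_k$ with $U_0 = \I_d$ and $U_{k+1} = U_k(\I_d + E_k)$, where at each stage $E_k$ is chosen (using the invertibility of the Sylvester operator modulo higher-order terms) to kill the off-diagonal blocks of $U_k^{-1} A U_k$ to one higher order in $\m$-adic filtration. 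The $\m$-adic completeness of $\C[[X]]$ guarantees convergence.

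The key steps, in order: (1) record the Sylvester isomorphism for the pair $(B^o_1, B^o_2)$ and note it persists after tensoring with $\C[[X]]/\m^N$; (2) set up the iteration that makes the off-diagonal blocks vanish, checking that each correction term $E_k$ lies in $\m^{k+1} Mat_d(\C[[X]])$ so that the product converges to some $V \in Mat_d(\C[[X]])$ with $V(0) = \I_d$ and $V^{-1} A V$ block diagonal; (3) upgrade $V$ to a \emph{unitary} $U$ — this is where normality of $A$ is essential. Because $A$ is normal and $V^{-1}AV$ is block diagonal with blocks having disjoint spectra at $X = 0$, the conjugated matrix is in fact normal as well (or one argues the off-diagonal blocks of $(V^{-1}AV)^*$ also vanish by the same coprimeness), and then a polar-decomposition argument, $V = U P$ with $P = (V^*V)^{1/2}$ positive definite (hence $P(0) = \I_d$ and $P$ invertible in $Mat_d(\C[[X]])$ since $V^*V$ is a unit), lets one replace $V$ by $U$; one checks $P$ commutes with the block structure so that $U^{-1} A U$ remains block diagonal. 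The square root $(V^*V)^{1/2}$ exists in $Mat_d(\C[[X]])$ by Hensel's Lemma / the implicit function theorem since $V^*V$ reduces to $\I_d$ at the origin.

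The main obstacle I expect is step (3): ensuring that passing from the merely invertible conjugator $V$ to the unitary one $U$ does not destroy the block-diagonal form, i.e. that the positive square root $P = (V^*V)^{1/2}$ respects the decomposition $\C^{d_1} \oplus \C^{d_2}$. The cleanest route is probably to run the iteration in step (2) already within the group $U_d(\C[[X]])$: instead of arbitrary corrections $\I_d + E_k$, use corrections of the form $\exp(S_k)$ with $S_k$ skew-Hermitian and block off-diagonal, $S_k \in \m^{k+1}$; then each $U_k$ is automatically unitary, the limit $U$ is unitary, and one only needs that the skew-Hermitian Sylvester equation $B^o_1 S - S B^o_2 = (\text{given off-diagonal block})$ is solvable — which follows from the same spectral disjointness together with the Hermitian structure forced by normality of $A$. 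This variant folds the unitarity into the construction and sidesteps the square-root bookkeeping; I would present it that way, with the plain $GL_d$ iteration as motivation.
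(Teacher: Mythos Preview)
Your plan is sound and takes a genuinely different route from the paper. The paper does not iterate: it applies the Nash Implicit Function Theorem to the map
\[
\Psi(U,Y_1,Y_2,Y_3)=U\begin{pmatrix}B^o_1+Y_1&0\\Y_3&B^o_2+Y_2\end{pmatrix}U^{*},
\]
checks via the Sylvester lemma that $d\Psi$ is a submersion at $(\I_d,0,0,0)$, and so obtains a unitary $U$ with $U^{-1}AU$ block \emph{lower triangular}; the last line is ``normal and block triangular implies block diagonal''. The extra variable $Y_3$ is there precisely to absorb the dimensional mismatch that worries you in step~(3): skew-Hermitian off-diagonal corrections have real dimension $2d_1d_2$, while the full off-diagonal target has $4d_1d_2$. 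Your skew-Hermitian iteration resolves this mismatch differently --- normality of $M_k=U_k^{-1}AU_k$ forces, modulo $\m^{k+2}$, the relation $B^o_1D_k^{*}-D_k^{*}B^o_2=(B^o_1)^{*}C_k-C_k(B^o_2)^{*}$ between the two off-diagonal blocks, and a short computation using normality of each $B^o_i$ shows that the $T_k$ solving the $C_k$-equation then automatically solves the $D_k$-equation. This is what your phrase ``Hermitian structure forced by normality'' has to mean, and it deserves to be written out; a cleaner alternative is to kill only one off-diagonal block in the iteration and invoke ``normal $+$ triangular $\Rightarrow$ diagonal'' at the end, which is the paper's trick transplanted into your framework. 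One practical advantage of the paper's IFT argument is that it transfers immediately to Henselian subrings of $\C[[X]]$ closed under composition with algebraic power series (their Remark~2.2 and Section~4), whereas your $\m$-adic limit a priori needs completeness.
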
 

\begin{proof}
Consider  
\begin{align*} 
&\Psi =(\Psi_1, \Psi_2, \Psi _3,\Psi_4)   : \\ 
& U_d(\C[[X]]) \times  Mat_{d_1} (\C[[X]]) \times Mat_{d_2} (\C[[X]]) \times 
 Mat_{d_2,d_1} (\C[[X]]) 
 \to 
  Mat_{d} (\C[[X]])  , 
  \end{align*} 
defined by 
  \begin{align}
(U ,  Y_1 , Y_2, Y_3)  \to    U  \begin{pmatrix}
B\at _1 + Y_1 & 0 \\
  Y_3 &  B\at _2 +Y_2 
  \end{pmatrix}  U^{*} 
 =  \begin{pmatrix}
 T_1 & T_4 \\
  T_3 &   T_2 
  \end{pmatrix} .
  \end{align}     
  where $\Psi_i(U,Y_1,Y_2,Y_3)=T_i$, $i=1,2,3,4$.\\
  Recall that a tangent vector at $\I_d$ to $U_d(\C[[X]]) $  is a matrix $\mathbf u$ that is skew-hermitian 
  $\mathbf u = - \mathbf u^*$.  We shall write it as 
    \begin{align}
\mathbf u  =   \begin{pmatrix}
\mathbf z_1  & \mathbf x \\
- \mathbf x^* &  \mathbf z_2
  \end{pmatrix}.  
  \end{align}
The differential of $\Psi$ at $(\I_d,0,0,0)$ on the vector  $(\mathbf u , \mathbf  y_1 , \mathbf y_2, \mathbf y_3)$ 
is given by 
  \begin{align}
& d\Psi_i  (\mathbf u , \mathbf  y_1 , \mathbf y_2, \mathbf y_3) = \mathbf y_i  + \mathbf z_i B\at _i - B\at _i \mathbf z_i, \qquad i=1,2 \\
& d\Psi_3  (\mathbf u , \mathbf  y_1 , \mathbf y_2, \mathbf y_3)=  \mathbf y_3 -  \mathbf x^* B\at _1 + B\at _2  \mathbf x^* , 
\\
& d\Psi_4  (\mathbf u , \mathbf  y_1 , \mathbf y_2, \mathbf y_3) = \mathbf x 
B\at _2 - B\at _1  \mathbf x. 
  \end{align}
 This differential is a linear epimorphism thanks to  Lemma \ref{lem:Cohn}, 
 that we state and prove below, due to  Cohn \cite{cohn}, see also \cite{zurro}.  Therefore, we may apply the Implicit Function Theorem (IFT).  \\
 More precisely, we apply the IFT to the following map of finitely dimensional manifolds  
 \begin{align*} 
&\Psi_{|_M}  : 
M:= U_d(\C) \times  Mat_{d_1} (\C) \times Mat_{d_2} (\C) \times 
 Mat_{d_2,d_1} (\C) 
 \to 
  Mat_{d} (\C)  , 
  \end{align*} 
  that by Lemma \ref{lem:Cohn} is a submersion at $(\I_d,0,0,0)$. Note that the unitary group $U_d(\C)$ is not a complex manifold but only a nonsingular real algebraic variety.  Therefore, it is convenient to work in the Nash real algebraic set-up.  By the Nash IFT, see e.g. Corollary 2.9.8 of \cite{BCR},
   there exist open sets $\U\subset M$, $\V\subset \R^{2d^2}=Mat_d(\C)$, with $(\I_d,0,0,0)\in \U$ and $\Psi(\I_d,0,0,0)=A(0)\in\V$, and local Nash diffeomorphisms
  $$\theta_1:\U'\subset \R^N\lgw \U,\ \ \theta_1(0)=(\I_d,0,0,0)$$
  $$\theta_2:\V\lgw \V'\subset \R^{2d^2},\ \ \theta_2(A(0))=0$$
  such that $\theta_2\circ\Psi_{|_M}\circ\theta_1(t_1,\ldots, t_N)=(t_1,\ldots, t_{2d^2})$. Here $N$ is the dimension of $M$ as a real manifold, i.e. $N=d^2+2d_1^2+2d_2^2+2d_1d_2$. The condition that $\theta_i$ are Nash diffeomorphisms 
   means that their components are given by algebraic power series with real coefficients.\\
  Now we have that $A(X)=A(0)+\ovl A(X)$ where $\ovl A(0)=0$. Therefore $\theta_2(A(X))$ is well defined and
  $$\theta_2(A(0)+\ovl A(X))=(t_1(X),\ldots, t_{2d^2}(X))$$
  where the $t_i(X)$ are real (formal) power series vanishing at 0. Let us choose freely real (formal) power series $t_{2d^2+1}(X)$, \ldots, $t_N(X)$ vanishing at 0. We set
  $$(U(X),Y_1(X),Y_2(X),Y_3(X))=\theta_1(t_1(X),\ldots, t_N(X)).$$ 
 This is well defined since the $t_i(X)$ are power series vanishing at 0. Then we have 
 $$\Psi(U(X),Y_1(X),Y_2(X),Y_3(X))=A(X)$$
 $$\text{and } (U(0),Y_1(0),Y_2(0), Y_3(0))=(\I_d,0,0,0).$$
This means that there are matrices  $B_1=B\at _1+Y_1(X)$, $B_2=B\at_2+Y_2(X)$, $B_3=Y_3(X)$ such that 
   \begin{align}\label{eq:forA}
U ^{-1} A U =   \begin{pmatrix}
B_1 & 0 \\
 B_3 &  B_2 
  \end{pmatrix}. 
  \end{align} 
  The matrix on the right-hand side is normal and block triangular.  Therefore it is block diagonal.  This ends the proof of lemma.
\end{proof}

\begin{rmk}\label{rem_red}
Lemma \ref{lem:SplitMat} remains valid if we replace $\C[[X]]$ by any subring containing the ring of algebraic power series and stable under composition with algebraic power series.
\end{rmk}

\begin{rmk} The matrix $U$ is not unique since $N>2d^2$.
\end{rmk}

\begin{lemma}\label{lem:Cohn}\cite[Lemma 2.3]{cohn}\cite{zurro}
Let $R$ be an unitary commutative ring, $A \in  Mat_{p} (R)$, $B \in Mat_q (R)$, $C \in Mat_{p,q} (R)$, such that  $P_A$ and $P_B$ are coprime, i.e. there exist polynomials $U$ and $V$ such that $UP_A+VP_B=1$. Then there is a  matrix $M\in Mat_{p,q}(R)$ such that $AM - MB = C$.
\end{lemma}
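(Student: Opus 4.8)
The plan is to regard the assignment $M\mapsto AM-MB$ as an $R$-linear endomorphism $\phi$ of the free $R$-module $Mat_{p,q}(R)$ and to show, using the Cayley--Hamilton theorem together with the B\'ezout relation $UP_A+VP_B=1$, that $\phi$ is surjective (in fact bijective); any preimage of $C$ is then the matrix $M$ we want.

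Concretely, I would introduce the two operators $L,R\colon Mat_{p,q}(R)\to Mat_{p,q}(R)$ given by $L(M)=AM$ and $R(M)=MB$, so that $\phi=L-R$. Left and right multiplications commute, and for any polynomial $Q\in R[Z]$ one has $Q(L)(M)=Q(A)M$ and $Q(R)(M)=MQ(B)$; moreover $L$ and $R$ may be substituted into any polynomial identity in two commuting indeterminates over $R$. Applying this to the divided-difference identity $P_B(x)-P_B(y)=(x-y)\,S(x,y)$, where $S\in R[x,y]$ has coefficients among the coefficients of $P_B$, gives $(L-R)\circ S(L,R)=P_B(L)-P_B(R)$. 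By Cayley--Hamilton $P_B(B)=0$, hence $P_B(R)=0$, so $(L-R)\circ S(L,R)=P_B(L)$, which is the operator $M\mapsto P_B(A)M$. It remains to check that $P_B(A)$ is invertible: specializing $UP_A+VP_B=1$ at $Z=A$ and using $P_A(A)=0$ yields $V(A)P_B(A)=\I_p$, and over a commutative ring a square matrix with a one-sided inverse is invertible (take determinants), so $P_B(A)$ is invertible with inverse $V(A)$. Consequently $(L-R)\circ S(L,R)\circ V(L)=P_B(L)\circ V(L)=\Id$, which shows $\phi$ is surjective; explicitly, $M:=S(L,R)\bigl(V(A)\,C\bigr)$ satisfies $AM-MB=C$.

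I expect no serious obstacle here; the argument is essentially bookkeeping around Cayley--Hamilton. The two points that need a little care are: (i) the passage from a one-sided to a two-sided inverse of $P_B(A)$, which is where commutativity of $R$ and the determinant argument enter; and (ii) making sure that all the operators in play --- $L$, $R$, $S(L,R)$, $V(L)$ --- genuinely commute, so that substituting $L$ and $R$ into the polynomial identity for $P_B(x)-P_B(y)$ is legitimate. As an alternative, more computational route one can iterate the relation $AM=MB+C$ to obtain $A^kM=MB^k+\sum_{i=0}^{k-1}A^iCB^{k-1-i}$, apply $P_A(A)=0$ to deduce $MP_A(B)=-W$ for an explicit matrix $W$ built from $A$, $B$, $C$ and the coefficients of $P_A$, and then set $M:=-W\,U(B)$ using $U(B)P_A(B)=\I_q$, verifying $AM-MB=C$ by a direct computation.
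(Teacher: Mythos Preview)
Your argument is correct and is essentially the same as the paper's: the paper sets $Q=VP_B$, defines $M=\sum_{i\ge 1}q_i\sum_{k=0}^{i-1}A^kCB^{i-k-1}$ (the divided-difference operator for $Q$ applied to $C$), and checks $AM-MB=Q(A)C-CQ(B)=C$ using $Q(A)=\I_p$ and $Q(B)=0$. Your version separates $Q$ into $P_B$ and $V$ and phrases things via the commuting operators $L,R$, but the underlying mechanism---Cayley--Hamilton plus the telescoping identity $Q(x)-Q(y)=(x-y)S_Q(x,y)$---is identical; note also that your one-sided inverse worry in (i) is moot here since $V(A)$ and $P_B(A)$ are polynomials in $A$ and hence commute.
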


\begin{proof}
By assumption  there exist polynomials $U$ and $V$ such that
$UP_{A}+VP_{B}=1.$
Set $Q=VP_{B}$. Then $Q(A)=\I_p$ and $Q(B)=0$. Let us write $Q(T)=\sum_{i=0}^rq_iT^i$ and
set 
$M=\sum_{i=1}^rq_i\sum_{k=0}^{i-1}A^kCB^{i-k-1}.$
 Then
$$AM-MB=A\sum_{i=1}^rq_i\sum_{k=0}^{i-1}A^kCB^{i-k-1}-\sum_{i=1}^rq_i\sum_{k=0}^{i-1}A^kCB^{i-k-1}B=$$
$$=\sum_{i=0}^rq_iA^i C-C\sum_{i=0}^rq_iB^i=Q(A)C-CQ(B)=C.$$
\end{proof}

\subsection{Complex normal matrices}

\begin{theorem} \label{thm:normaltheorem}
Let $A(X)=(a_{i,j})_{i,j=1,\ldots, d}\in Mat_d (\C[[X]])$ be normal and suppose that  
$\Delta_A = X_1^{\alpha_1} \cdots X_n^{\alpha_n} g (X)$ 
with $g(0)\ne 0$.  Then there is a unitary matrix 
 $U \in U_d (\C[[X]])$ such that 
$$U(X)^{-1} A(X) U(X)= D(X),$$ 
where $D(X)$ is a diagonal matrix with entries in $\C[[X]]$.    

If, moreover, the last nonzero coefficient of  $P_A$ is a monomial times a unit, then the nonzero entries of $D(X)$  are also of the form a monomial times a unit $X^\alpha a (X)$ and their exponents $\alpha \in \N^n$ are well ordered.   
\end{theorem}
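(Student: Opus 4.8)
The plan is to imitate the algorithmic proof of the Abhyankar--Jung theorem from \cite{PR}, with Lemma~\ref{lem:SplitMat} playing the role of the Hensel-type factorization of polynomials. Throughout I would use freely that the operations involved preserve normality --- conjugation by a constant or power-series unitary matrix, subtraction of a scalar matrix, composition of $A$ with a power-series map having \emph{real} coefficients, division of $A$ by a real monomial --- and that the generalized discriminant is unchanged under a translation $Z\mapsto Z-c$. First I would carry out a base reduction: by the spectral theorem over $\C$, a constant unitary conjugation puts $A(0)$ in block-diagonal form with one block $\lambda_k\I_{m_k}$ for each distinct eigenvalue $\lambda_k$ of $A(0)$, and iterating Lemma~\ref{lem:SplitMat} then splits $A$ into a block-diagonal normal matrix with blocks $B_k$ such that $B_k(0)=\lambda_k\I_{m_k}$. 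Since the pairwise resultants $\operatorname{Res}(P_{B_k},P_{B_l})$ are units at the origin, formula \eqref{disc}, together with the fact that $\C[[X]]$ is a UFD in which the $X_j$ are prime, shows that each $\Delta_{B_k}$, hence each $\Delta_{B_k}'$, is again a monomial times a unit. Replacing $B_k$ by $B_k-\lambda_k\I_{m_k}$, everything is reduced to the case $A(0)=0$.

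So assume $A(0)=0$, with $A$ normal and $\Delta_A'$ a monomial times a unit. If $(P_A)_{red}$ has degree one then $P_A=(Z+c_1/d)^d$, so $N:=A+(c_1/d)\I$ is normal and satisfies $N^d=0$; since $N$ and $N^*$ commute, $N^*N$ is a nilpotent Hermitian matrix. Now the identity $\operatorname{tr}(K^*K)=\sum_{i,j}|k_{i,j}|^2$ shows that this trace vanishes in $\C[[X]]$ only if $K=0$ --- look at the lowest homogeneous part of $\sum|k_{i,j}|^2$ and evaluate at real points of the parameter space --- from which a Hermitian matrix $L$ with $L^{2^r}=0$ must satisfy $L^{2^{r-1}}=0$, and hence, descending, $L=0$; applying this to $L=N^*N$ and once more gives $N=0$, so that $A$ is already scalar and we are done. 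We may therefore assume $(P_A)_{red}$ has degree $\geq 2$, and since then all eigenvalues vanish at the origin we have $\Delta_A'=X^{\alpha'}\cdot(\text{unit})$ with $\alpha'\neq 0$.

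At this point I would run the induction of \cite{PR}. The core case is exactly the one just reached ($A(0)=0$, all eigenvalues vanishing at $0$): there one uses a monomial substitution with real coefficients, a Tschirnhausen-type translation, and a division by a common monomial factor of the entries of the resulting matrix --- such a factor exists by normality, via the trace identity of the previous paragraph applied to eigenvalues that share a common monomial divisor --- to reach a matrix whose value at the origin has at least two distinct eigenvalues; Lemma~\ref{lem:SplitMat} then splits it into blocks of smaller size whose discriminants are again monomials times units (the pertinent resultants being units at the origin), and one recurses. The process terminates and produces a diagonalization of $A$ over an \emph{a priori} ramified extension of $\C[[X]]$; a final descent argument, which is where normality is essential, shows that $U$ and $D$ can in fact be taken over $\C[[X]]$ itself. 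I expect this descent to be the main obstacle: Abhyankar--Jung applied to $(P_A)_{red}$ by itself only guarantees that the eigenvalues are fractional power series of the parameters, so normality --- again through an argument like the one above --- is what must be used to rule out genuine fractional exponents in $U$ and $D$.

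Finally, for the ``moreover'' statement, write $D=U^{-1}AU=\operatorname{diag}(\delta_1,\ldots,\delta_d)$ with $\delta_i\in\C[[X]]$, and let $c_k$ be the last nonzero coefficient of $P_A$ (if all $c_i$ vanish then $D=0$ and there is nothing to prove); then $Z^{d-k}$ divides $P_A=\prod_i(Z-\delta_i)$, so exactly $d-k$ of the $\delta_i$ vanish and $c_k=\pm\prod_{\delta_i\neq 0}\delta_i$. If $c_k$ is a monomial times a unit, then --- $\C[[X]]$ being a UFD with the $X_j$ prime --- each nonzero $\delta_i$ is a monomial times a unit, say $\delta_i=X^{\alpha^{(i)}}a_i$ with $a_i(0)\neq 0$. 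Moreover, for $\delta_i\neq\delta_j$ the square $(\delta_i-\delta_j)^2$ divides $\Delta_A'$, so $\delta_i-\delta_j$ is itself a monomial times a unit. Writing, with $\wedge$ the componentwise minimum,
\[
\delta_i-\delta_j=X^{\alpha^{(i)}\wedge\alpha^{(j)}}\Bigl(X^{\alpha^{(i)}-(\alpha^{(i)}\wedge\alpha^{(j)})}a_i-X^{\alpha^{(j)}-(\alpha^{(i)}\wedge\alpha^{(j)})}a_j\Bigr),
\]
the second factor is then forced to be a monomial times a unit as well, and since its two exponent vectors have disjoint supports this is possible only if one of them vanishes, i.e. only if $\alpha^{(i)}\leq\alpha^{(j)}$ or $\alpha^{(j)}\leq\alpha^{(i)}$. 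Hence the exponents $\alpha^{(i)}\in\N^n$ of the nonzero entries of $D$ are pairwise comparable and, being finitely many, well ordered.
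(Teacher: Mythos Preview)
Your first paragraph and the treatment of the ``moreover'' clause are fine and match the paper closely (your direct argument that the exponents are pairwise comparable is exactly the content of \cite[Lemma 4.7]{BM1}, which the paper only cites). The genuine gap is in your third paragraph: you propose to ``run the induction of \cite{PR}'' on the matrix level, \emph{including monomial substitutions in the parameters}, and you end with a diagonalization over an a priori ramified extension of $\C[[X]]$ that you then have to descend. You yourself call this descent ``the main obstacle'' and you do not carry it out. That is a real missing step, and it is also an unnecessary one.

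The paper's proof never leaves $\C[[X]]$ and never performs a substitution in the $X_i$. The point is that the relevant output of \cite{PR} is not the algorithm but its conclusion (Theorem~1.1 there, recorded here as Proposition~\ref{lem:PRlemma}): after Tschirnhausen, the ideal $(c_i^{d!/i})_{i\ge 2}$ generated by the weighted coefficients of $P_A$ is a monomial ideal. Combined with the elementary normality fact (Lemma~\ref{lem:mainlemma}) that a normal matrix all of whose $c_i$ vanish is zero, this gives Proposition~\ref{lem:MainLemma}: after subtracting the trace, the ideal generated by the \emph{entries} of $A$ is principal and generated by a monomial. Dividing $A$ by that monomial yields $A(0)\ne 0$ with $\operatorname{Tr} A(0)=0$, hence at least two distinct eigenvalues at the origin, and Lemma~\ref{lem:SplitMat} plus induction on $d$ finishes. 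No ramified extension, no descent. Your trace identity $\operatorname{tr}(K^*K)=\sum_{i,j}|k_{i,j}|^2$ is indeed the germ of Lemma~\ref{lem:mainlemma}, but you should feed it with divisibility information on the $c_i$ (which is over $\C[[X]]$) rather than with a putative common monomial divisor of the eigenvalues (which live only in $\C[[X^{1/q}]]$); that switch is exactly what eliminates the descent problem.
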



\begin{proof}[Proof of Theorem \ref{thm:normaltheorem}] 
We prove Theorem \ref{thm:normaltheorem} by induction on $d$.  Thus we suppose that the theorem holds for matrices of order less than $d$.   
Our proof follows closely the proof of Abhyankar-Jung Theorem  given in \cite{PR}, that is algorithmic and  based on Theorem 1.1 of \cite{PR}.  The analog of this theorem for our set-up is Proposition \ref{lem:MainLemma}.  For its proof we will need 
the following easy generalization of Theorem 1.1 of \cite{PR} 
to the case of matrices with a not necessarily reduced characteristic polynomial.

\begin{prop}\label{lem:PRlemma} 
Let $P (Z) = Z^d+c_2 (X) Z^{d-2}+\cdots+c_d(X) \in \C[[X]][Z]$ and suppose that there is $c_i \not \equiv 0$.  
 If the discriminant $\Delta $ of $(P)_{red}$ equals a monomial times a unit, then 
the ideal $(c_i^{d!/i}(X))_{i=2,\ldots ,d}\subset \C [[X]]$ is principal and generated by a monomial.  
\end{prop}

\begin{proof} 
By the Abhyankar-Jung Theorem, see e.g. \cite{PR}, there is $q\in \N^n$, $q_i\ge 1$ for all $i$, 
such that the roots of $P_{red}$  are in $\C[[X^{1/q}]]$  and moreover  their differences are fractional monomials.  
The set of these roots (without multiplicities) coincides with the set of roots of $P$.  
Then we argue as in the proof of Proposition 4.1 of \cite{PR}.  
\end{proof}

We note that the exponents  make the $c_i^{d!/i}(X)$ for $i=2,\ldots ,d$
 homogeneous of the same degree as functions of the roots of $P$.  In the case of the characteristic polynomial of a matrix, these coefficients will become homogeneous of the same degree in terms of the entries of the matrix.  

Proposition \ref{lem:PRlemma} implies easily its analog for 
normal matrices.

\begin{prop}\label{lem:MainLemma} 
Suppose that  the assumptions of Theorem  \ref{thm:normaltheorem} are satisfied and that, moreover, $A$ is 
nonzero and $Tr (A(X))=0$.  Then the ideal  $(a_{ij})_{i,j=1,\ldots ,d}\subset \C [[X]]$ is principal and generated by a monomial.  
\end{prop}

\begin{proof} 
We denote by $P_A(Z) = Z^d+c_2 (X) Z^{d-2}+\cdots+c_d(X) \in \C[[X]][Z]$ the characteristic polynomial of $A(X)$. Since $Tr(A(X))=0$ we have that $c_1(X)=0$. Since $A(X)$ is nonzero, one of the $c_i$ is nonzero. Therefore, 
  by Proposition \ref{lem:PRlemma} and \eqref{disc},  the ideal $(c_i^{d!/i}(X))_{i=2,\ldots ,d}$ is principal and generated by a monomial.  This is still the case if we divide $A$ by the maximal monomial  that  divides all entries of $A$.  
  Thus we may assume that no monomial (that is not constant) divides $A$.  
If $A(0)=0$ then there is $j$ such that all  the coefficients $c_i(X)$ of $P_A$ are divisible  $X_j$.  Therefore, for normal matrices, by Lemma \ref{lem:mainlemma}, $A_{|X_j=0}=0$, that means that all entries of $A$ are divisible by $X_j$,  a contradiction. Thus $A(0)\ne 0$ that ends the proof.    
\end{proof}

       \begin{lemma} \label{lem:mainlemma}
Let $A(X)\in Mat_d (\C[[X]])$ be normal.   If every coefficient of $P_{A }$ is zero: $c_i(X) = 0$, $i=1, \ldots , d$, then $A= 0$. 
\end{lemma}

\begin{proof}
Induction on the number of variables $n$. The case $n=0$ is obvious since the matrix $A(0)$ is normal.\\
Suppose $c_i(X) =0$ for $i=1, \ldots , d$.  Consider $A_1 = A_{|X_1=0}$.  By the inductive assumption $A_1\equiv 0$, that is every entry of $A$ is divisible by $X_1$.  If $A\ne 0$ then we divide it by the maximal power  
$X_1^{m} $  that divides all coefficients of $A$.  The resulting matrix, that we denote by $\tilde A$, is normal and the coefficients of its characteristic polynomial  
$P_{\tilde A} $ are $\tilde c_i (X)  = X_1^{-i m} c_i(X)=0$.  This is impossible because then $P_{\tilde A_1}=0$ and $\tilde A_1\ne 0$, that contradicts the inductive assumption. 
\end{proof}

Now we can finish the proof of Theorem \ref{thm:normaltheorem}. We suppose that $A$ is nonzero and  make a sequence of reductions simplifying the form of $A(X)$.  First we note that we may assume 
 $Tr (A(X))=0$.  Indeed, we may replace $A(X)$ by $\hat A(X) = A-Tr (A(X)) \Id$.  Then we may apply Proposition 
 \ref{lem:MainLemma}   and hence, after dividing  $A$ by the maximal monomial that divides all entries of $A$, 
 assume that    $A(0)\ne  0$.

Thus suppose  $A(0)\ne 0$ and $Tr (A(X))=0$.  
 Denote by $ P\at(Z)$ the characteristic polynomial  of $A(0)$.  Since $A(0)$ is normal, nonzero, of trace zero, it has at least two distinct eigenvalues.  Therefore, 
 after a unitary change of coordinates, we may assume that $A(0)$ is block diagonal   
   \begin{align}
A(0) =   \begin{pmatrix}
  B_1\at & 0 \\
  0 &  B_2 \at
  \end{pmatrix}, 
  \end{align} 
 with $B_i\at \in Mat_{d_i} (\C)$, $d=d_1+d_2$,  and with the resultant of the characteristic polynomials of $B_1\at$ and 
 $B_2\at$  nonzero.  By Lemma \ref{lem:SplitMat} there is a unitary matrix  $U \in U_d (\C[[X]] )$, $U(0) = \I_d$, such that 
   \begin{align}\label{eq:equationforA}
U ^{-1} A U =   \begin{pmatrix}
B_1 & 0 \\
  0 &  B_2 
  \end{pmatrix}, 
  \end{align} 
  and $B_i (0) = B_i\at$, $i=1,2$. \\
Note that the matrices $B_i$ satisfying the formula \eqref{eq:equationforA} have to be normal since $A$ is normal.  Moreover,  $P_{U ^{-1} A U} = P_A= P_{B_1} P_{B_2}$.  This shows that the discriminants of 
    $(P_{B_1 })_{red}$ and $(P_{B_2 })_{red}$ divide the  $\Delta_A $ and hence we may apply to $B_1$ and $B_2$ the inductive assumption.

 For the last claim we note that the extra assumption implies that each nonzero eigenvalue of $A$ is a monomial times a unit.  Moreover the assumption on the discriminant implies the same for all nonzero 
differences of the eigenvalues.  Therefore by \cite[Lemma 4.7]{BM1}, the exponents of these monomials are well ordered. The proof of 
Theorem \ref{thm:normaltheorem} is now complete.
           \end{proof}
       


\subsection{Real normal matrices}

This is the real counterpart of Theorem \ref{thm:normaltheorem}.

\begin{theorem} \label{thm:realnormaltheorem}
Let $A(X)\in Mat_d (\R[[X]])$ be normal and suppose that  $\Delta_A = X_1^{\alpha_1} \cdots X_n^{\alpha_n} g (X)$ 
with $g(0)\ne 0$.   Then there exists an orthogonal matrix 
 $O \in Mat_d (\R[[X]])$ such that 
\begin{align}\label{eq:bigmatrix}
O(X)^{-1} \cdot A(X) \cdot O(X)=\left[\begin{array}{cccccc}C_1(X) & & & & &   \\
           & \ddots & & & 0 & \\
           & & C_s(X) & & & \\
           & & & \l_{2s+1}(X) & & \\
           & 0 & & & \ddots & \\
           & & & & & \l_d(X) \end{array}\right] ,
\end{align} 
where $s\geq 0$, $\l_{2s+1}(X)$, \ldots, $\l_{d}(X)\in\R[[X]]$ and the $C_i(X)$ are $(2\times 2)$-matrices of the form
\begin{align}\label{blocs}
\left[\begin{array}{cc} a(X) & b(X)\\ -b(X) & a(X)\end{array}\right]
\end{align}
           for some $a(X)$, $b(X)\in\R[[X]]$.
         If $A(X)$ is symmetric we may assume that $s=0$, i.e. $O(X)^{-1} \cdot A(X) \cdot O(X)$ is diagonal.

If, moreover, the last nonzero coefficient of  $P_A$ is a monomial times a unit, then the nonzero entries of $O(X)^{-1} \cdot A(X) \cdot O(X)$ are of the form a monomial times a unit $X^\alpha a (X)$ and their exponents $\alpha \in \N^n$ are well ordered. 
\end{theorem}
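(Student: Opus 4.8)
The plan is to deduce Theorem \ref{thm:realnormaltheorem} from the complex case, Theorem \ref{thm:normaltheorem}, by exploiting the structure of real normal matrices and the fact that complex conjugation acts on everything in sight. First I would apply Theorem \ref{thm:normaltheorem} to $A$ regarded as a matrix in $Mat_d(\C[[X]])$: this gives a unitary $U \in U_d(\C[[X]])$ with $U^{-1}AU = D$ diagonal, and moreover $D(0) = U(0)^{-1}A(0)U(0)$ where $A(0)$ is a genuine real normal matrix. The eigenvalues of $A$ are the diagonal entries of $D$; since $A$ has real entries, its characteristic polynomial $P_A(Z)$ has coefficients in $\R[[X]]$, so the set of eigenvalues (with multiplicities) is stable under complex conjugation. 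The eigenvalues that are fixed by conjugation are precisely the $\l_j(X) \in \R[[X]]$; the others come in conjugate pairs $\mu(X), \overline{\mu}(X)$, and each such pair will produce a $2\times 2$ block of the form \eqref{blocs} with $a(X) = \tfrac{1}{2}(\mu + \overline{\mu})$, $b(X) = \tfrac{1}{2i}(\mu - \overline{\mu})$, once we pass to a real invariant subspace.

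The key construction is to produce the orthogonal matrix $O$. For each conjugate pair of eigenvalues $\mu \neq \overline\mu$, the eigenspace $E_{\overline\mu}$ is the complex conjugate of $E_\mu$ (since $A$ is real), so $E_\mu \oplus E_{\overline\mu}$ is the complexification of a real $A$-invariant subspace $W \subset \R[[X]]^d$ of rank $2\mu_\mu$ (twice the multiplicity); on $W$, $A$ restricts to a real normal operator with no real eigenvalues, and after an orthonormal real change of basis (which one obtains by a Gram–Schmidt process applied to real and imaginary parts of the complex eigenvectors extracted from the columns of $U$) it takes block form \eqref{blocs}. For the real eigenvalues $\l_j$, the corresponding eigenspaces are already complexifications of real subspaces, and orthonormalizing a real basis diagonalizes that part. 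Assembling these real orthonormal bases into the columns of $O$ gives an orthogonal matrix over $\R[[X]]$ realizing \eqref{eq:bigmatrix}. When $A$ is symmetric, all eigenvalues are real (the characteristic polynomial of a real symmetric matrix splits over $\R$, and more relevantly $A A^* = A^2$ forces the eigenvalues to be real power series), so no $2\times 2$ blocks occur and $s = 0$.

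The delicate point, and what I expect to be the main obstacle, is carrying out the orthonormalization \emph{over the ring $\R[[X]]$} while keeping everything real and making sure the resulting change of basis is genuinely orthogonal, not merely invertible. The naive Gram–Schmidt can introduce denominators (one divides by norms $\langle v, v\rangle$), and one must check these norms are units in $\R[[X]]$, i.e.\ nonzero at $0$; this is where one uses that $U(0)$ is unitary so the relevant vectors at $X=0$ are already orthonormal, hence the Gram matrix is $\I$ at the origin and stays invertible. A clean way to package this is to note that $M := U^* \overline{U}$ is unitary and, because $A$ is real, satisfies a commutation relation with $D$ relating $\mu$-blocks to $\overline\mu$-blocks; one then takes a unitary (indeed, the relevant symmetric-unitary) square root or uses Lemma \ref{lem:SplitMat}-style splitting to produce the orthogonal $O$ directly as $O = U \cdot (\text{a block-diagonal unitary correction})$, arranged so that $\overline O = O$. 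Alternatively, and perhaps most economically, one can rerun the inductive proof of Theorem \ref{thm:normaltheorem} verbatim in the real setting: Lemma \ref{lem:SplitMat} has an obvious orthogonal analogue (replace $U_d(\C)$ by $O_d(\R)$, skew-hermitian by skew-symmetric, and Cohn's Lemma \ref{lem:Cohn} applies over $\R[[X]]$), and the induction splits $A(0)$ over $\R$ into blocks whose characteristic polynomials are coprime, the irreducible real-coprime pieces being either $1$-dimensional (real eigenvalue) or spanned by a conjugate pair (giving a $2\times 2$ rotation-type block); the well-orderedness of exponents in the last claim then follows exactly as in the complex case from \cite[Lemma 4.7]{BM1}, applied to the real and imaginary parts $a(X)$, $b(X)$ and the real eigenvalues $\l_j(X)$ together with their pairwise differences.
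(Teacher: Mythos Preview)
Your proposal is correct and takes essentially the same approach as the paper: apply Theorem~\ref{thm:normaltheorem} over $\C$, pair conjugate eigenvalues and eigenvectors, and descend to a real orthonormal basis. The paper's execution is only slightly more direct than yours: for each conjugate pair $(v_k,\overline{v_k})$ it simply writes down $u_{2k-1}=(v_k+\overline{v_k})/\sqrt{2}$ and $u_{2k}=i(v_k-\overline{v_k})/\sqrt{2}$, which are automatically real and orthonormal (since $v_k\perp\overline{v_k}$ as eigenvectors of a normal operator for distinct eigenvalues), so no Gram--Schmidt is needed for the non-real blocks, and for the symmetric case it observes that the block form \eqref{eq:bigmatrix} must itself be symmetric, forcing $b=0$ in each $C_i$.
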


\begin{proof}
This corollary follows from Theorem \ref{thm:normaltheorem} by a classical argument.

By Theorem \ref{thm:normaltheorem} there exists an orthonormal basis of eigenvectors   of $A(X)$ in $\C[[X]]^d$ such that 
the corresponding eigenvalues  are 
$$\l_1(X), \overline \l_1 (X), \ldots, \l_s(X), \overline \l_s(X), \l_{2s+1}(X),\ldots, \l_d(X), $$
where $\l_i(X)\in\C[[X]]\backslash \R[[X]]$ for $i\leq s$, $\l_i(X)\in\R[[X]]$ for $i\geq 2s+1$ and $\overline{a}(X)$ denotes the power series whose coefficients are the conjugates of $a(X)$.\\
If $v_i(X)\in\C[[X]]^d$ is an eigenvector associated to $\l_i(X)\notin\R[[X]]$ then $\overline v_i(X)$ is an eigenvector associated to $\overline \l_i(X)$. So we can assume that $A(X)$ has  an orthonormal basis of eigenvectors of the form $v_1$, $\overline v_1$, $v_2$, $\overline v_2$, \ldots, $v_s$, $\overline v_s$, $v_{2s+1}$, \ldots, $v_d$  where $v_{2s+1}$, \ldots, $v_d\in\R[[X]]^d$.
Now let us define
$$u_1=\frac{v_1+\overline v_1}{\sqrt{2}},u_2=i\frac{v_1-\overline v_1}{\sqrt{2}},\ldots, u_{2s-1}=\frac{v_s+\overline v_{s}}{\sqrt{2}}, u_{2s}=i\frac{v_s-\overline v_{s}}{\sqrt{2}}$$
and 
$$u_{2s+1}=v_{2s+1},\ldots, u_d=v_d.$$

The vectors $u_i$ are real and form an orthonormal basis. We have that
$$A(X)u_{2k-1}=A(X)\frac{v_k+\overline v_{k}}{\sqrt{2}}=\frac{1}{\sqrt{2}}(\l_kv_k+\overline\l_k\overline v_{k})=$$
$$=\frac{1}{\sqrt{2}}(\frac{1}{\sqrt{2}}\l_k(u_{2k-1}-iu_{2k})+\frac{1}{\sqrt{2}}\overline \l_k(u_{2k-1}+iu_{2k}))=\frac{\l_k+\overline\l_k}{2}u_{2k-1}+i\frac{\overline\l_k-\l_k}{2}u_{2k}$$
and 
$$A(X)u_{2k}=i\frac{\l_k-\overline\l_k}{2}u_{2k-1}+\frac{\overline\l_k+\l_k}{2}u_{2k}.$$
Therefore in the basis $u_1$, \ldots $u_d$ the matrix has the form \eqref{eq:bigmatrix}.  

   If $A(X)$ is symmetric then the matrix \eqref{eq:bigmatrix} is also symmetric 
   and hence the matrices $C_i(X)$ are symmetric.  Therefore we may assume that $s=0$.        
      \end{proof}


\section{Singular value decomposition}\label{sec:SVD}

Let $A\in Mat_{m,d}  (\C)$.  It is well known (cf. \cite{golub1996}) that 
\begin{align}\label{svd}
A=  U D V^{-1} ,
\end{align}
for some unitary matrices $V \in U_m (\C)$,  
 $U \in U_d (\C)$, and  a 
 (rectangular) diagonal matrix $D$ with real nonnegative coefficients.  The 
diagonal elements of $D$ are the nonnegative square roots of the 
eigenvalues of $A^* A$; they are called \emph{singular values} of $A$. 
If $A$ is real then $V$ and $U$ can be chosen orthogonal. 
The decomposition \eqref{svd} is called \emph{the singular value decomposition} 
(SVD) of $A$.

Let $A\in Mat_{m,d}  (\C [[X]])$.  
Note that   \begin{equation}\label{eq:corres} 
\text{ if $A^*A u = \lambda u$ then $(A A^*)A u = \lambda A u$}. \end{equation} 
Similarly, if $A A^* v = \lambda v$ then $(A^* A )A^* v = \lambda A^*v$.  
Therefore the matrices $A^*A$ and $AA^*$ over the field of formal power series $\C(\!(X)\!)$ have the same nonzero eigenvalues 
 with the same multiplicities.  
  In what follows we suppose $m\le d$.  Then  $P_{A^*A} = Z^{d-m} P_{AA^*}$. 
\begin{theorem} \label{thm:SVD}
Let $A=A(X)\in Mat_{m,d}  (\C [[X]])$, $m\le d$,  and suppose that $\Delta_{A^*A} = X_1^{\alpha_1} \cdots X_n^{\alpha_n} g (X)$ 
with $g(0)\ne 0$.  Then there are unitary matrices $V \in U_m (\C[[X]])$,  
 $U \in U_d (\C[[X]])$ such that 
$$D= V(X)^{-1} A(X) U(X)$$ is (rectangular) diagonal.     

If $A=A(X)\in Mat_{m,d}  (\R [[X]])$ then $U$ and $V$ can be chosen real (that is orthogonal) so that $V(X)^{-1} A(X) U(X)$ is 
block diagonal  as in \eqref{eq:bigmatrix}.  
\end{theorem}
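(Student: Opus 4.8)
The plan is to reduce Theorem \ref{thm:SVD} to Theorem \ref{thm:normaltheorem} (or its real counterpart, Theorem \ref{thm:realnormaltheorem}) applied to the normal matrices $A^*A$ and $AA^*$, using the correspondence \eqref{eq:corres} between their eigenvectors as the bridge. First I would observe that $A^*A \in Mat_d(\C[[X]])$ is Hermitian, hence normal, and by hypothesis $\Delta_{A^*A}$ is a monomial times a unit; so Theorem \ref{thm:normaltheorem} provides a unitary $U \in U_d(\C[[X]])$ with $U^{-1}(A^*A)U = D_1$ diagonal, the diagonal entries being the squares of the (prospective) singular values. Writing $U = (u_1 \mid \cdots \mid u_d)$ in terms of its columns, the $u_j$ form an orthonormal basis of eigenvectors of $A^*A$ in $\C[[X]]^d$, with $A^*A u_j = \lambda_j u_j$.

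Next I would build the matrix $V$ from the vectors $Au_j$. By \eqref{eq:corres}, whenever $\lambda_j \neq 0$ the vector $Au_j$ is an eigenvector of $AA^*$ for the eigenvalue $\lambda_j$; and a short computation gives $\langle Au_i, Au_j\rangle = \langle A^*A u_i, u_j\rangle = \lambda_j \langle u_i, u_j\rangle = \lambda_j \delta_{ij}$, so after normalizing $v_j := Au_j / \sqrt{\lambda_j}$ (the square root being available in $\C[[X]]$ since $\lambda_j$ is a monomial times a unit — here is where I would invoke the "monomial times a unit" consequence of the discriminant hypothesis, noting that each $\lambda_j$ divides $\Delta_{A^*A}$ up to the multiplicity factors, or more directly that $\lambda_j$ is a nonzero entry of the diagonalization, hence of the stated form) the nonzero-eigenvalue $v_j$ are orthonormal in $\C[[X]]^m$. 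Since $m \le d$ and $P_{A^*A} = Z^{d-m}P_{AA^*}$, there are exactly $m$ indices $j$ with $\lambda_j \neq 0$ counted with multiplicity (or fewer, the remaining directions being filled arbitrarily); I complete $\{v_j : \lambda_j \neq 0\}$ to an orthonormal basis of $\C[[X]]^m$ — this can be done over $\C[[X]]$ by Gram–Schmidt starting from a constant completion at $X=0$ — and set $V = (v_1 \mid \cdots \mid v_m)$. By construction $A u_j = \sqrt{\lambda_j}\, v_j$ for the nonzero eigenvalues, and $A u_j = 0$ for $\lambda_j = 0$ (since then $\|Au_j\|^2 = \lambda_j = 0$), so $V^{-1}AU$ is rectangular diagonal with entries $\sqrt{\lambda_j}$.

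For the real case I would run the same argument replacing Theorem \ref{thm:normaltheorem} by Theorem \ref{thm:realnormaltheorem}: the symmetric matrix $A^*A = A^tA$ over $\R[[X]]$ admits an orthogonal $O$ with $O^{-1}(A^tA)O$ diagonal (the symmetric case of \ref{thm:realnormaltheorem}, so $s=0$), giving a real orthonormal eigenbasis $u_1,\dots,u_d$; the vectors $v_j = Au_j/\sqrt{\lambda_j}$ are then real, and completing to a real orthonormal basis of $\R[[X]]^m$ yields orthogonal $U$ and $V$ with $V^{-1}AU$ of the stated block-diagonal shape (in fact genuinely diagonal with nonnegative entries $\sqrt{\lambda_j}$, the $(2\times 2)$ blocks \eqref{blocs} being degenerate to scalars here since the $\lambda_j$ are real). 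The main obstacle I anticipate is bookkeeping at $X=0$: one must check that the square roots $\sqrt{\lambda_j}$ genuinely exist in $\C[[X]]$ (resp. $\R[[X]]$ with nonnegative leading coefficient), which relies on $\lambda_j$ being a monomial times a unit — I would first divide out the maximal common monomial and, if necessary, group equal-order eigenvalues, ensuring $\lambda_j = X^{2\beta_j} c_j(X)$ with $c_j(0) \neq 0$ so that $\sqrt{c_j}$ exists by Hensel — and that the orthonormal completion of a partial orthonormal system can be carried out over the power series ring rather than only over $\C$, which follows from lifting a constant completion at the origin and applying Gram–Schmidt, all operations staying inside $\C[[X]]$.
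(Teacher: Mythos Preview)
Your reduction to Theorem \ref{thm:normaltheorem} for $A^*A$ is the right starting point, but the construction of $V$ via $v_j := Au_j/\sqrt{\lambda_j}$ has a genuine gap: under the hypotheses of Theorem \ref{thm:SVD} the eigenvalues $\lambda_j$ need \emph{not} be monomials times units, and hence $\sqrt{\lambda_j}$ need not lie in $\C[[X]]$. The extra clause in Theorem \ref{thm:normaltheorem} that would give this requires the last nonzero coefficient of $P_{A^*A}$ to be a monomial times a unit, which is precisely the additional assumption of Theorem \ref{thm:SVD2}, not of Theorem \ref{thm:SVD}. Your justification that ``each $\lambda_j$ divides $\Delta_{A^*A}$'' is also not correct: the discriminant is built from differences $\lambda_i-\lambda_j$, not from the $\lambda_j$ themselves. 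A concrete counterexample is the $1\times 1$ matrix $A=(X_1+iX_2)$: here $\lambda = A^*A = X_1^2+X_2^2$ and the discriminant hypothesis is vacuous, yet $\sqrt{X_1^2+X_2^2}\notin\C[[X]]$, so your $v_1 = A/\sqrt{\lambda}$ does not exist. (The theorem still holds, of course, with $U=V=(1)$ and $D=(X_1+iX_2)$; the point is that the diagonal entries of $D$ are not required to be the positive square roots $\sqrt{\lambda_j}$.) The same issue makes your real-case conclusion too strong: the Example following Theorem \ref{thm:SVD} with $a=X_1$, $b=X_2$ shows that one genuinely cannot avoid the $2\times 2$ blocks \eqref{blocs} in general.

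The paper circumvents the square-root problem entirely. It applies Theorem \ref{thm:normaltheorem} to \emph{both} $A^*A$ and $AA^*$, obtaining unitaries $U_1,U_2$ with $U_1^{-1}A^*AU_1=D_1$ and $U_2^{-1}AA^*U_2=D_2$ diagonal. Replacing $A$ by $\hat A=U_2^{-1}AU_1$, one has $\hat A^*\hat A=D_1$ and $\hat A\hat A^*=D_2$ diagonal simultaneously. After permuting so that equal eigenvalues are aligned, the correspondence \eqref{eq:corres} forces $\hat A$ to be block (rectangular) diagonal, with a square block $A_\lambda$ for each nonzero eigenvalue $\lambda$. The key observation is that $A_\lambda^*A_\lambda = A_\lambda A_\lambda^* = \lambda\,\I$, so $A_\lambda$ is \emph{normal}; one then invokes Theorem \ref{thm:normaltheorem} (resp.\ \ref{thm:realnormaltheorem}) once more on each $A_\lambda$ to finish. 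No square root of $\lambda$ is ever extracted: the diagonal entries of the final $D$ are the eigenvalues of the normal blocks $A_\lambda$, complex numbers $\mu$ with $\mu\bar\mu=\lambda$, not $\sqrt{\lambda}$ itself.
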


\begin{proof}
We  apply Theorem \ref{thm:normaltheorem} to $A^*A$ and $AA^*$.  Thus there are $U_1 \in U_d(\C[[X]])$,  
 $U_2 \in U_m (\C[[X]])$ such that $D_1= U_1^{-1} A^*A U_1$  and $D_2= U_2^{-1} AA^* U_2$ are diagonal.  If $A(X)$ is real then $A^*A$ and $AA^*$ are symmetric so we may assume by Theorem \ref{thm:realnormaltheorem} that $U_1$ and $U_2$ are orthogonal.

 Set 
 $\hat A= U_2^{-1} A U_1$.  Then 
\begin{align*}
& \hat A^* \hat A = (U_2^{-1} A U_1)^*U_2^{-1} A U _1=  U_1^{-1} A^*A U_1=D_1 \\
&  \hat A \hat A^* = U_2^{-1} A U_1 (U_2^{-1} A U_1)^* =  U_2^{-1}A  A^* U_2 =D_2.  
\end{align*}
Thus by replacing $A$ by $\hat A$ we may assume that both $A^*A$ and $AA^*$ are diagonal and we denote them by $D_1$ and $D_2$ respectively.   

There is a one-to-one correspondence between the nonzero entries of $D_1$ and $D_2$, that is the eigenvalues of $A^*A$ and $AA^*$. 
Let us order these eigenvalues (arbitrarily)
\begin{align}\label{eq:order}
\lambda_1(X), \ldots, \lambda_r(X).
\end{align}
By permuting the canonical bases of $\C[[X]]^m$ and $\C[[X]]^d$ we may assume that the entries on the diagonals of  
$A^*A$ and $AA^*$ appear in the order of  \eqref{eq:order} (with the multiplicities), completed by zeros.  

Since $A$ sends the eigenspace of $\lambda$ of $A^*A$ to the eigenspace of $\lambda$ of $AA^*$,  $A$ is block (rectangular) 
diagonal  in these new bases, with square matrices $A_\lambda$ on the diagonal  corresponding to 
each $\lambda \ne 0$. By symmetry $A^*$ is also block diagonal in these new bases with the square matrices $A_\l^*$ for each $\l\neq 0$. Since $A_\lambda ^*A_\lambda= A_\lambda A_\lambda^*= \lambda \I$, the matrix $A_\lambda$ is normal.  Thus Theorem \ref{thm:normaltheorem} shows that there exist unitary matrices $U'$ and $V'$ such that ${V'}^{-1}AU'$ is diagonal.  
Similarly, by Theorem  \ref{thm:realnormaltheorem} we conclude the real case.  
\end{proof}

\begin{ex}
Consider square matrices of order $1$, that is $d=m=1$, and identify such  a matrix with its entry $a(X)\in\C[[X]]$.  
Then the assumption on the discriminant is always satisfied.    Let us write
$$a(X)=a_1(X)+ia_2(X),\ \ \ a_1(X),a_2(X)\in\R[[X]].$$
A unitary $1\times 1$-matrix corresponds to a series $u(X)=u_1(X)+iu_2(X)$ with $u_1(X)$, $u_2(X)\in\R[[X]]$ such that
$u_1^2+u_2^2=1$. It is not possible in general to find unitary $u$ and $v$ such that $v(X)a(X)u(X)\in\R[[X]]$ and hence 
in Theorem \ref{thm:SVD} we cannot assume that the entries of $D$ are real power series.   Indeed, since all matrices of order $1$ commute it is sufficient to consider the condition $a(X)u(X)\in\R[[X]]$ that  is equivalent to
$$a_1u_2+a_2u_1=0.$$
But if $\gcd(a_1,a_2)=1$, for instance $a_1(X) = X_1, a_2(X) = X_2$, then  $X_1| u_1$ and $X_2|u_2$ 
and hence we see that $u(0) = 0$ that contradicts $u_1^2+u_2^2=1$.  

A similar example in the real case, with $A$ being a block of the form \eqref{blocs} and $a(X)=X_1$, $b(X) = X_2$, shows that we cannot require $D$ to be 
 diagonal in the real case.  
Indeed, in this case the (double) eigenvalue of $A^*A$ is $a^2(X) + b^2(X)$ and it is not the square of an element of $\R[[X]]$.
\end{ex}

\begin{theorem} \label{thm:SVD2}
Suppose in addition to the assumption of Theorem \ref{thm:SVD} that the last nonzero coefficient of 
the characteristic polynomial of $\D_{A^*A}$ is of the form $X_1^{\beta_1} \cdots X_n^{\beta_n} h (X)$ 
with $h(0)\ne 0$.  Then, in the conclusion of Theorem \ref{thm:SVD}, both in the real and the complex case,
 we may require that $V(X)^{-1} A(X) U(X)$ is (rectangular) diagonal with the entries on the diagonal in $\R [[X]]$.

 Moreover the nonzero entries of $V(X)^{-1} A(X) U(X)$  are of the form a monomial times a unit  $X^\alpha a (X) $ (we may additionally require that $a(0)> 0$) and their exponents $\alpha \in \N^n$ are well ordered.

\end{theorem}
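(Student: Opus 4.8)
The plan is to reduce both assertions to one fact about the eigenvalues of $A^*A$, and then to ``correct the phases'' in the decomposition furnished by Theorem \ref{thm:SVD}. For the key fact I would first apply the complex case of Theorem \ref{thm:SVD} to get unitary $V_0\in U_m(\C[[X]])$ and $U_0\in U_d(\C[[X]])$ with $D_0=V_0^{-1}AU_0$ rectangular diagonal; let $\mu_1,\dots,\mu_m$ be its diagonal entries. Then $D_0^*D_0=U_0^{-1}(A^*A)U_0$ is the $d\times d$ diagonal matrix $\mathrm{diag}(\overline\mu_1\mu_1,\dots,\overline\mu_m\mu_m,0,\dots,0)$, so the nonzero $\overline\mu_i\mu_i$ are exactly the nonzero eigenvalues of $A^*A$, counted with multiplicity. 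The extra hypothesis says precisely that the last nonzero coefficient of $P_{A^*A}$ is a monomial times a unit, so the last part of Theorem \ref{thm:normaltheorem}, applied to the normal matrix $A^*A$, shows that every nonzero eigenvalue of $A^*A$ is a monomial times a unit with well-ordered exponent. Now each nonzero $\mu_i$ divides the monomial-times-unit $\overline\mu_i\mu_i$ in the unique factorization domain $\C[[X]]$, hence $\mu_i=X^{\delta}f$ with $f(0)\ne0$; then $\overline\mu_i\mu_i=X^{2\delta}\,\overline ff$, which forces the exponent of that eigenvalue to be $2\delta$ (in particular \emph{even}) and its unit part to be $\overline ff$, a series invariant under conjugation and therefore real with positive value $|f(0)|^2$ at the origin. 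So $\overline\mu_i\mu_i=\sigma^2$ with $\sigma:=X^{\delta}\sqrt{\overline ff}\in\R[[X]]$ a monomial times a unit having positive value at the origin, $\mu_i=\sigma\omega$ with $\omega:=f/\sqrt{\overline ff}$ satisfying $\omega\overline\omega=1$, and the exponents $\delta$ so produced are well ordered, being halves of the well-ordered exponents of the eigenvalues of $A^*A$.

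To conclude in the complex case I replace $U_0$ by $U_1:=U_0\Omega$, where $\Omega$ is the diagonal unitary matrix carrying $\overline\omega$ in each position of a nonzero $\mu_i$ and $1$ elsewhere. Then $V_0^{-1}AU_1=D_0\Omega$ is rectangular diagonal with real nonzero entries $\sigma=X^{\delta}\sqrt{\overline ff}$, which are monomials times units with positive value at the origin and well-ordered exponents, as required.

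In the real case I would instead start from the real form of Theorem \ref{thm:SVD}: there are orthogonal $V_0,U_0$ with $\hat A=V_0^{-1}AU_0$ block diagonal as in \eqref{eq:bigmatrix}, with $2\times2$ blocks $C_i$ of the form \eqref{blocs}, real scalar entries $\lambda_j$, and zero columns. From $\hat A^T\hat A$ one reads off that the nonzero eigenvalues of $A^*A$ are the $a_i^2+b_i^2$ (each appearing twice) and the $\lambda_j^2$; by the key fact each of these equals some $\sigma^2$ with $\sigma=X^\delta g$, $g(0)>0$. A scalar block is handled by flipping a sign (an orthogonal operation) so that $\lambda_j=X^\delta g$. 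For a $2\times2$ block, $a_i^2+b_i^2=X^{2\delta}g^2$ together with the fact that a sum of squares in a power series ring over $\R$ vanishes only when every summand does --- applied after setting the variables to zero one after another --- gives $X^{\delta}\mid a_i$ and $X^{\delta}\mid b_i$; hence $R_i:=\sigma^{-1}C_i$ has entries in $\R[[X]]$, is orthogonal since $R_i^TR_i=\sigma^{-2}(a_i^2+b_i^2)\,\I_2=\I_2$, and satisfies $C_iR_i^{-1}=\sigma\I_2$. Multiplying $\hat A$ on the right by the block-diagonal orthogonal matrix assembled from the $R_i^{-1}$ and the sign-fixing matrices, and composing with $V_0$ and $U_0$, produces orthogonal $V,U$ for which $V^{-1}AU$ is real rectangular diagonal with nonzero entries that are monomials times positive units with well-ordered exponents.

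The main difficulty is the key fact itself: that the monomial exponents of the eigenvalues of $A^*A$ are even and that their unit parts are positive real units. This is not visible from ``a root of $P_{A^*A}$ is a monomial times a unit'' alone; it comes out only by playing the conjugation identity $\lambda=\overline\mu\mu$ supplied by the complex SVD against unique factorization in $\C[[X]]$, after which the positivity of the unit part is automatic since $\overline ff$ takes the value $|f(0)|^2>0$ at the origin. Extracting the common monomial factor from the entries of a $2\times2$ block in the real case is the only other point requiring care, and it is dispatched by the usual formal-reality argument.
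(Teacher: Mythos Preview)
Your proof is correct and follows essentially the same route as the paper's: reduce to the (block) diagonal form given by Theorem~\ref{thm:SVD}, observe via Theorem~\ref{thm:normaltheorem} applied to $A^*A$ that each nonzero eigenvalue is a monomial times a positive real unit, and then correct each diagonal entry or $2\times2$ block by a unitary (resp.\ orthogonal) factor. The only noteworthy difference is in the complex $1\times1$ reduction: the paper writes $a=a_1+ia_2$ and asserts that $(a_1,a_2)$ is principal generated by a monomial, whereas you deduce directly from unique factorization in $\C[[X]]$ that $\mu_i\mid\overline{\mu_i}\mu_i$ forces $\mu_i=X^{\delta}f$ with $f$ a unit; these are two phrasings of the same fact, and your formulation is arguably cleaner. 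The real case (extracting $X^{\delta}$ from $a,b$ via the vanishing of real sums of squares and then recognizing $\sigma^{-1}C_i$ as orthogonal) is identical to the paper's.
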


\begin{proof}
By the extra assumption each nonzero eigenvalue of $A^*A$ is a monomial times a unit.  The assumption on the discriminant implies the same for all nonzero 
differences of the eigenvalues.  Therefore by \cite[Lemma 4.7]{BM1}, the exponents of these monomials are well ordered. 

In the complex case by Theorem \ref{thm:SVD}  we may assume $A$ diagonal.   Thus it suffices to consider $A$ of order $1$ with the 
entry $a(X)$.  Write $a(X) = a_1(X) +i a_2(X)$ with $a_i(X) \in  \R[[X]]$.  By assumption,  $|a|^2 = \lambda = X^{\beta}  h (X)$ , 
  $h(0)\ne 0$, where $\lambda$ is an eigenvalue of  $A^*A$.   If  $a_1^2(X)+a_2^2(X)$ is a monomial times a unit,  then  the ideal $(a_1(X),a_2(X))$ is generated by a monomial, 
$(a_1(X),a_2(X))= X^\gamma (\tilde a_1(X),\tilde a_2(X))$, $2\gamma = \beta$ 
and $\tilde a_1^2(0) + \tilde a_2^2(0) \ne 0$.   Thus 
$$
a(X) u(X) = X^\gamma  (\tilde a_1^2+ \tilde a_2^2)^{1/2}
$$
with $u(X) = \frac {\tilde a_1 - i \tilde a_2}{(\tilde a_1^2+ \tilde a_2^2)^{1/2}}$.

Let us now show the real case.  It suffices to consider $A$ of the form 
given by \eqref{blocs}.  By assumption,  $a(X)^2+b(X)^2$ is a monomial times a unit and this is possible only 
 if the ideal $(a(X),b(X))$ is generated by a monomial, 
$(a(X),b(X))= X^\gamma (a_0(X), b_0(X))$ and $a_0^2(0) + b_0(0)^2 \ne 0$.  Then 
\begin{align*}
\left[\begin{array}{cc} a & b\\ -b & a\end{array}\right]
\frac 1 {(a_0^2+b_0^2)^{1/2}} \left[\begin{array}{cc} a_0 & - b_0\\ b_0 & a_0\end{array}\right]
= X^\gamma \left[\begin{array}{cc} {(a_0^2+b_0^2)^{1/2}} & 0\\ 0 & {(a_0^2+b_0^2)^{1/2}}\end{array}\right]
\end{align*}
\end{proof}


\section{The case of a Henselian local ring}\label{sec:henselian}
Let $\K=\R$ or $\C$. For every integer $n\in\N$, we consider a subring of $\K[[X_1,\ldots, X_n]]$, denoted by $\K\{\!\!\{X_1,\ldots, X_n\}\!\!\}$. 
For a subrings, we consider the following properties:\\
\begin{equation}\tag{P1}\label{P1} \K\{\!\!\{X_1,\ldots, X_n\}\!\!\} \text{ contains }\K[X_1,\ldots, X_n],\end{equation}
\begin{equation}\tag{P2}\label{P2}\K\{\!\!\{X_1,\ldots, X_n\}\!\!\} \text{ is a Henselian local ring with maximal ideal generated by the }X_i\end{equation}
\begin{equation}\tag{P3}\label{P3}   \K\{\!\!\{X_1,\ldots, X_n\}\!\!\}\cap (X_i)\K[[X_1,\ldots, X_n]]=(X_i)\K\{\!\!\{X\}\!\!\} \text{ for every }i=1, \ldots, n \end{equation}

Let us stress the fact that a ring $\K\{\!\!\{X\}\!\!\}$ satisfying \eqref{P1}, \eqref{P2}, \eqref{P3} is not necessarily Noetherian.

The ring of algebraic $\K\lg X\rg$ or convergent power series $\K\{ X\}$ over $\K$ satisfy \eqref{P1}, \eqref{P2}, \eqref{P3}. In fact any ring satisfying \eqref{P1}, \eqref{P2}, \eqref{P3} has to contain the ring of algebraic power series. 
The ring of germs of $\K$-valued functions defined in a given quasianalytic class (i.e. satisfying (3.1) - (3.6) of \cite{BM}) also satisfies \eqref{P1}, \eqref{P2}, \eqref{P3}.
 \\
Moreover we have the following lemma:

\begin{lemma}
Let $\K\{\!\!\{X\}\!\!\}$ be a ring satisfying \eqref{P1}, \eqref{P2}, \eqref{P3}. Let $f_1$, \ldots, $f_p\in \K\{\!\!\{X\}\!\!\}$ be vanishing at 0, and let $g(Y)\in\K\langle Y_1,\ldots, Y_p\rangle$. Then 
$$g(f_1,\ldots, f_p)\in \K\{\!\!\{X\}\!\!\}.$$
\end{lemma}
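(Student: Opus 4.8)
The plan is to reduce the claim to the definition of a Henselian local ring by interpreting $g(f_1,\dots,f_p)$ as a root of a suitable polynomial over $\K\{\!\!\{X\}\!\!\}$. First I would recall that, since $g$ is an algebraic power series in $Y=(Y_1,\dots,Y_p)$ over $\K$, there is a nonzero polynomial $Q(Y,T)\in\K[Y][T]$ with $Q(Y,g(Y))=0$; after differentiating with respect to $T$ and using that $g(0)$ is a simple root of $T\mapsto Q(0,T)$ (or passing to an irreducible factor), we may assume $\partial Q/\partial T\,(0,g(0))\ne 0$. Substituting $Y_i=f_i$, which is legitimate because each $f_i$ vanishes at $0$, yields a polynomial $R(T):=Q(f_1,\dots,f_p,T)\in\K\{\!\!\{X\}\!\!\}[T]$ by property \eqref{P1} (the coefficients of $R$ are polynomial expressions in the $f_i$, hence lie in $\K\{\!\!\{X\}\!\!\}$).

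Next I would check the Hensel hypotheses for $R$ over the local ring $\K\{\!\!\{X\}\!\!\}$ with maximal ideal $\m=(X_1,\dots,X_n)$: modulo $\m$ the polynomial $R$ reduces to $Q(0,\dots,0,T)\in\K[T]$, whose reduction has $g(0)$ as a simple root by the normalization above, so $R$ has a simple root in the residue field $\K$. By \eqref{P2}, $\K\{\!\!\{X\}\!\!\}$ is Henselian, so this simple root lifts uniquely to a root $\gamma\in\K\{\!\!\{X\}\!\!\}$ of $R$ with $\gamma\equiv g(0)\bmod\m$. It remains to identify $\gamma$ with $g(f_1,\dots,f_p)$. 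Both $\gamma$ and $g(f_1,\dots,f_p)$ are elements of $\K[[X]]$ that are roots of $R(T)$ and reduce to $g(0)$ modulo $(X)$; since $\K[[X]]$ is itself Henselian (or since the simple root lifts uniquely already inside $\widehat{\K\{\!\!\{X\}\!\!\}}\subset\K[[X]]$), the lift is unique, so $\gamma=g(f_1,\dots,f_p)$, which proves $g(f_1,\dots,f_p)\in\K\{\!\!\{X\}\!\!\}$.

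The main obstacle I expect is the normalization step that guarantees a \emph{simple} root: a general annihilating polynomial $Q$ of $g$ need not have $\partial Q/\partial T\,(0,g(0))\ne 0$, so one must argue carefully — e.g. take $Q$ of minimal degree in $T$, or use that the minimal polynomial of $g$ over the field $\K(Y)$ is separable because $\K\lg Y\rg$ is a domain of characteristic $0$, then clear denominators and track that the leading coefficient and the discriminant do not vanish at $Y=0$ after the substitution $Y_i=f_i$. One has to be slightly careful that substituting $f_i$ (which may not be a unit times a monomial, just an element of $\m$) preserves the non-vanishing of $\partial Q/\partial T$ at the origin; this is automatic since evaluation at $X=0$ sends $f_i\mapsto 0$ and hence $\partial Q/\partial T(f_1,\dots,f_p,g(0))\mapsto \partial Q/\partial T(0,\dots,0,g(0))\ne 0$. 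A secondary, purely bookkeeping point is property \eqref{P3}, which is not actually needed here but would be invoked if one instead wanted to descend an identity known only in $\K[[X]]$; in the argument above uniqueness of the Henselian lift makes \eqref{P3} superfluous for this particular lemma.
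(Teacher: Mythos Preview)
Your reduction to Hensel's lemma is the right instinct, and it is exactly how the paper proceeds \emph{for the building blocks}, but the normalization step you flag as ``the main obstacle'' is a genuine gap and cannot be fixed by the methods you list. Concretely, take $p=1$ and $g(Y)=Y\sqrt{1+Y}\in\K\langle Y\rangle$, so $g(0)=0$. The minimal polynomial of $g$ over $\K[Y]$ is $M(Y,T)=T^2-Y^2(1+Y)$, and $\partial M/\partial T(0,0)=0$. Any $Q\in\K[Y][T]$ with $Q(Y,g(Y))=0$ is (by Gauss, since $M$ is monic) of the form $Q=M\cdot S$, and then
\[
\tfrac{\partial Q}{\partial T}(0,0)=\tfrac{\partial M}{\partial T}(0,0)\,S(0,0)+M(0,0)\,\tfrac{\partial S}{\partial T}(0,0)=0,
\]
so \emph{no} annihilating polynomial of $g$ has a simple root at $(0,g(0))$. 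Hence ``take $Q$ of minimal degree'' and ``track that the discriminant does not vanish at $Y=0$'' both fail here (the discriminant of $M$ is $4Y^2(1+Y)$, which vanishes at $0$). Separability over $\K(Y)$ tells you only that the discriminant is not the zero polynomial, not that it is a unit at the origin.

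The paper avoids this by not applying Hensel to $g$ directly. It uses that $\K\langle Y\rangle$ is the Henselization of $\K[Y]_{(Y)}$, so one can write $g(Y)=q_0(Y)+\sum_i q_i(Y)\,g_i(Y)$ with $q_i\in\K[Y]$ and each $g_i\in\K\langle Y\rangle$ an ``IFT element'', i.e.\ the unique solution of some $P_i(Y,T)=0$ with $P_i(0,0)=0$ and $\partial P_i/\partial T(0,0)\ne 0$. Your Hensel argument then applies verbatim to each $g_i(f)$, and the polynomial combination lands in $\K\{\!\!\{X\}\!\!\}$ by \eqref{P1}. (In the example above this amounts to writing $g=Y\cdot h$ with $h=\sqrt{1+Y}$ satisfying the IFT.) So your proof becomes correct once you insert this decomposition step; without it, the claimed simple-root normalization is false in general.
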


\begin{proof}
Since $\K\langle Y\rangle$ is the Henselization of $\K[Y]$, we can write
$$g(Y)=q_0(Y)+\sum_{i=1}^mq_i(Y)g_i(Y)$$
where the $q_i$ are polynomials and the $g_i$ are series of $\K\langle Y\rangle$, $g_i(0)=0$, satisfying the Implicit Function Theorem. That is, for every $i=1,\ldots, m$, there is a polynomial $P_i(Y,T)\in\K[Y,T]$ such that
$$P_i(0,0)=0,\ \frac{\partial P_i}{\partial T}(0,0)\neq 0$$
and $P_i(Y,g_i(Y))=0$. Let us set $f=(f_1,\ldots, f_p)$ and
$$F_i(X,T)=P_i(f(X),T)\in\K\{\!\!\{X\}\!\!\}[T].$$
We have
$$F_i(0,0)=0,\ \frac{\partial F_i}{\partial T}(0,0)\neq 0.$$
Thus $F_i=0$ has a unique solution in $\K[[X]]$ (and even in $\K\{\!\!\{X\}\!\!\}$) vanishing at 0. But  $g_i(f_1,\ldots, f_p)$ is clearly this solution, hence $g_i(f_1,\ldots, f_p)\in \K\{\!\!\{X\}\!\!\}$. Therefore $g(f_1,\ldots, f_p)\in \K\{\!\!\{X\}\!\!\}$.
\end{proof}

We remark that the only tools we use for the proofs of  Theorems \ref{thm:normaltheorem}, 
\ref{thm:realnormaltheorem}, \ref{thm:SVD} are the facts that the ring of formal power series is stable by division by coordinates, the Implicit Function Theorem (via Lemma \ref{lem:SplitMat} which is equivalent to the Henselian property), \and the fact that the ring of formal power series contains the ring of algebraic power series and is stable under composition with algebraic power series (via Lemma \ref{lem:SplitMat} ; see Remark \ref{rem_red}). Therefore, we obtain the following:

\begin{theorem}\label{thm:henselian}
Theorems \ref{thm:normaltheorem} (for $\K=\C$), \ref{thm:realnormaltheorem} (for $\K=\R$), and \ref{thm:SVD} remain valid if we replace $\K[[X]]$ by a ring $\K\{\!\!\{X\}\!\!\}$ satisfying \eqref{P1}, \eqref{P2}, \eqref{P3}.
\end{theorem}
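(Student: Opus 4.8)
The plan is to carefully isolate the three structural ingredients that the proofs of Theorems \ref{thm:normaltheorem}, \ref{thm:realnormaltheorem}, and \ref{thm:SVD} actually use, verify that each survives the passage from $\K[[X]]$ to a ring $\ring := \K\{\!\!\{X\}\!\!\}$ satisfying \eqref{P1}, \eqref{P2}, \eqref{P3}, and then re-run the same induction verbatim. The three ingredients are: (i) stability under division by coordinates, i.e. if $f\in\ring$ and $f\in(X_i)\K[[X]]$ then $f/X_i\in\ring$; (ii) the Implicit Function Theorem in the Nash setting, used in the proof of Lemma \ref{lem:SplitMat}; and (iii) stability of $\ring$ under composition with algebraic power series vanishing at the origin, which is exactly the content of the Lemma just proved above and which is also what Remark \ref{rem_red} requires.

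First I would observe that ingredient (i) is precisely property \eqref{P3}, so nothing needs to be done beyond citing it; it is used in Lemma \ref{lem:mainlemma} (dividing a matrix by $X_1^m$), in Proposition \ref{lem:MainLemma} (dividing $A$ by the maximal monomial dividing all its entries), and in the final reduction step of Theorem \ref{thm:normaltheorem}. Next I would check ingredient (iii): the preceding Lemma shows that for $f_1,\dots,f_p\in\ring$ vanishing at $0$ and $g\in\K\langle Y_1,\dots,Y_p\rangle$ one has $g(f)\in\ring$, which is exactly the hypothesis needed for Remark \ref{rem_red} to apply. Therefore Lemma \ref{lem:SplitMat} holds with $\C[[X]]$ replaced by $\ring$ (and $U_d(\ring)$ in place of $U_d(\C[[X]])$), because in its proof the Nash diffeomorphisms $\theta_1,\theta_2$ have components that are algebraic power series, the $t_i(X)$ produced are elements of $\ring$ vanishing at $0$ (they are $\ring$-linear combinations arising from algebraic-power-series data applied to the entries of $\overline A\in\ring$, more precisely components of $\theta_2$ applied to the $\overline a_{i,j}$), and the free choices $t_{2d^2+1},\dots,t_N$ can simply be taken to be $0\in\ring$; then $(U,Y_1,Y_2,Y_3)=\theta_1(t)$ has entries in $\ring$ by the Lemma.

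Second I would propagate this through the main theorem. Proposition \ref{lem:PRlemma} is a statement purely about the ideal generated by monomials in $\C[[X]]$ and follows from the Abhyankar--Jung Theorem; since the hypothesis is that $\Delta$ is a monomial times a unit, the conclusion that $(c_i^{d!/i})_i$ is principal generated by a monomial is a statement about divisibility of the $c_i\in\ring$ by powers of the $X_j$, hence descends to $\ring$ using \eqref{P3}. Proposition \ref{lem:MainLemma} and Lemma \ref{lem:mainlemma} then go through unchanged, their only ring-theoretic inputs being \eqref{P3} and the already-established $\ring$-version of Lemma \ref{lem:SplitMat}. The inductive proof of Theorem \ref{thm:normaltheorem} itself uses only: the trace reduction $\hat A = A - \mathrm{Tr}(A)\,\Id$ (stays in $\ring$), the $\ring$-versions of Propositions \ref{lem:MainLemma} and Lemma \ref{lem:SplitMat}, the factorization $P_A = P_{B_1}P_{B_2}$ with the discriminants of the reduced factors dividing $\Delta_A$, and finally \cite[Lemma 4.7]{BM1} for the well-ordering of exponents, which is a statement about exponents in $\N^n$ and is insensitive to the ring. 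Theorem \ref{thm:realnormaltheorem} is deduced from Theorem \ref{thm:normaltheorem} by the linear-algebra manipulation with $v_i\mapsto (v_i\pm\overline v_i)/\sqrt2$; all operations are $\ring$-linear with coefficients in $\K$ (note $\frac1{\sqrt2}\in\R\subset\ring$ by \eqref{P1}), so it descends. Theorem \ref{thm:SVD} is likewise deduced by applying Theorem \ref{thm:normaltheorem} to $A^*A$ and $AA^*$ and then to the diagonal blocks $A_\lambda$, again using only $\ring$-linear algebra and permutations.

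The main obstacle I expect is making the descent of Lemma \ref{lem:SplitMat} fully rigorous: one must check that the power series $t_i(X)=(\theta_2\circ\text{evaluation})(X)$ genuinely lie in $\ring$ and not merely in $\K[[X]]$. This is where the preceding Lemma is essential, but it must be applied correctly — $\theta_2$ is a Nash map, i.e. its components are algebraic power series in the $2d^2$ coordinates of $Mat_d(\C)$, and we are substituting the entries $\overline a_{i,j}(X)\in\ring$ of $\overline A = A - A(0)$, which vanish at $0$; hence each $t_i\in\ring$. Dually one must confirm $\theta_1$ can be applied: its components are algebraic power series vanishing appropriately, evaluated at the $t_i\in\ring$ (including the freely chosen ones, set to $0$), so the resulting entries of $U$, $Y_1$, $Y_2$, $Y_3$ lie in $\ring$; and $U(0)=\I_d$ forces $U\in U_d(\ring)$. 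Once this single point is nailed down the rest is a mechanical check that every other manipulation in Sections 2 and 3 is either $\K$-linear algebra, a permutation of basis vectors, a division by a coordinate (covered by \eqref{P3}), or a composition with an algebraic power series vanishing at $0$ (covered by the Lemma), none of which leaves $\ring$.
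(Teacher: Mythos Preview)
Your proposal is correct and follows essentially the same approach as the paper: the paper's proof is in fact just the one-sentence remark preceding the theorem, which isolates exactly the three ingredients you name --- division by coordinates (\eqref{P3}), the Implicit Function Theorem/Henselian property (\eqref{P2}), and stability under composition with algebraic power series (the preceding Lemma, cf.\ Remark~\ref{rem_red}) --- and observes that these are the only ring-theoretic facts used. You have simply expanded this remark into a careful step-by-step verification, which is more detailed than what the paper provides but in no way different in substance.
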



\section{Rectilinearization of the discriminant}\label{sec:rectilinear}

Often the discriminant $\D_A$ does not satisfy the assumption of Theorem  \ref{thm:normaltheorem}, that is it is not a monomial times a unit. Then, in general, 
it is not possible to describe the eigenvalues and eigenvectors of $A$ as (even fractional) power series of $X$.  But this property can be recovered by making the discriminant $\D_A$ normal crossings by means of blowings-up.  This involves a change of the intederminates $X_1, \ldots , X_n$ understood now as variables or local coordinates. Note that in the previous sections all the algebraic operations 
concerned the matrices themselves and not the intederminates $X_1, \ldots , X_n$.  
To stress this difference we will say that we work now in the geometric case

In particular, in the complex case, such a change of local coordinates may affect the other assumption of Theorem  \ref{thm:normaltheorem},  $A$ being normal.
Consider, for instance, the following simple example.  


\begin{ex} (\cite{KP} Example 6.1.)
The eigenvalues of the real symmetric matrix  
\begin{align*}
A= 
\left[\begin{array}{cc} X_1^2 & X_1X_2\\ X_1X_2 & X_2^2 \end{array}\right]
\end{align*}
are $0$ and $X_1^2 + X_2^ 2$ but the eigenvectors of $A$ cannot be chosen as  power series in $X_1, X_2$.  
The discriminant $\Delta_A = (X_1^2 + X_2^ 2)^2$ does not satisfy the assumption of Theorem \ref{thm:normaltheorem}.  

Nevertheless, after a complex change of variables $Y_1=X_1+iX_2, Y_2=X_1 - i X_2$ 
the discriminant $\Delta_A $ becomes a monomial $Y_1^2Y_2^2$. But in these new variables the matrix $A$ is no longer normal, since this change of variables does not commute with the complex conjugation.  
\end{ex}

The above phenomenon does not appear if the change of local coordinates is real.  Therefore, in the normal case we need to work in the real geometric case. We begin by this case.   

Let  $M$ a real manifold belonging to one of the following categories: real analytic, real Nash, or defined in a given quasianalytic class.  In general, the Nash functions are (real or complex)  analytic functions satisfying locally algebraic equations, see e.g \cite{BCR} for the real case. 
Thus $f:(\K^n,0) \to \K$ is the germ of a Nash function if and only if its Taylor series is an algebraic power series.  By a quasianalytic class we mean a class of germs of functions satisfying (3.1) - (3.6) of  \cite{BM}.

We denote by $\O_M$ the sheaf of complex-valued regular 
(in the given category) functions on $M$.  
Let $p\in M$ and let $f\in \O_{M,p}$. We say that $f$ is \emph{normal crossings at $p$} if there is a system of local coordinates at $p$ such that $f$ is equal, in these coordinates, 
to a monomial times a unit. 




\begin{theorem}[Compare Theorem 6.2 of \cite{KP}]\label{blowup}
Let $M$ be a manifold defined in one of the following categories:
\begin{enumerate}
\item[(i)] real analytic; 
\item[(ii)] real Nash;
\item[(iii)] defined in a given quasianalytic class (i.e. satisfying (3.1) - (3.6) of \cite{BM}).
\end{enumerate}
 Let $A\in Mat_{m,d}  (\O_M(M) )$ and let  $K$ be a compact subset of $M$. Then there exist a neighborhood $\Omega$ of $K$ and the composite of a finite sequence of blowings-up  with smooth centers $\pi: U\lgw \Omega$, such that locally on $U$ 

\begin{enumerate}
\item[(a)] if $A$ is a complex normal matrix, then $A\circ\pi$ satisfies the conclusion of Theorem \ref{thm:normaltheorem};
\item[(b)] if $A$ is a real normal matrix, then $A\circ\pi$ satisfies the conclusion of Theorem \ref{thm:realnormaltheorem};
\item[(c)] if $A$ is  not necessarily a square matrix, then $A\circ\pi$  satisfies the conclusion of Theorems \ref{thm:SVD} and \ref{thm:SVD2}.
\end{enumerate}

\end{theorem}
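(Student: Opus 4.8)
The plan is to reduce the statement to the local algebraic theorems of the previous sections (Theorems \ref{thm:normaltheorem}, \ref{thm:realnormaltheorem}, \ref{thm:SVD}, \ref{thm:SVD2}) by first making the relevant discriminant \emph{normal crossings} along $K$, which is precisely what resolution of singularities provides in each of the three categories. First I would identify the function whose normal crossings property we need: in case (a) and (b) it is the discriminant $\Delta_A$ of the characteristic polynomial of $A$ (together, for the ``moreover'' parts, with the last nonzero coefficient of $P_A$); in case (c) it is $\Delta_{A^*A}$ (together with the last nonzero coefficient of the characteristic polynomial of $\Delta_{A^*A}$, as in Theorem \ref{thm:SVD2}). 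In all cases these are finitely many regular functions on $M$, i.e. global sections of $\O_M$, and the assumption that $A$ is normal (or the coefficients real, in the real geometric case) is preserved because the blowings-up act only on the parameter space $M$ and not on the matrix entries, and because we stay in a real category so that complex conjugation is untouched, as emphasized in the paragraph preceding the theorem.

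Next I would invoke resolution of singularities. In the real analytic and Nash categories this is Hironaka's theorem, respectively its Nash (algebraic) refinement; in a quasianalytic class it is the resolution theorem of Bierstone--Milman \cite{BM} (this is exactly the reason the axioms (3.1)--(3.6) of \cite{BM} are imposed). Apply it to the finite collection of functions singled out above: there is a neighborhood $\Omega$ of $K$ and a composite $\pi : U \to \Omega$ of finitely many blowings-up with smooth centers such that the pullback of each of those functions becomes, locally on $U$, a monomial times a unit in suitable local coordinates. Shrinking $\Omega$ so that $\pi$ is proper over a neighborhood of $K$ keeps everything in the prescribed category.

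Then, working locally on $U$: fix a point of $U$ and local coordinates $X = (X_1,\dots,X_n)$ in which $\pi^*\Delta_A$ (resp. $\pi^*\Delta_{A^*A}$) is $X^\alpha$ times a unit. The matrix $A\circ\pi$ has entries in the local ring $\O_{U,p}$, which in each of the three categories is a ring of the form $\K\{\!\!\{X\}\!\!\}$ satisfying \eqref{P1}, \eqref{P2}, \eqref{P3} (convergent power series, algebraic power series, or the quasianalytic local ring); hence Theorem \ref{thm:henselian} applies and gives exactly the conclusions of Theorems \ref{thm:normaltheorem}, \ref{thm:realnormaltheorem}, and \ref{thm:SVD}--\ref{thm:SVD2} for $A\circ\pi$ near $p$. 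For the ``moreover'' parts we additionally use that $\pi^*$ of the last nonzero coefficient of $P_A$ (resp. of the characteristic polynomial of $\Delta_{A^*A}$) is likewise a monomial times a unit, so the hypotheses of the second halves of those theorems are met. Since $p$ was arbitrary, this is the asserted local statement on $U$, and cases (a), (b), (c) follow by applying the relevant theorem.

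The main obstacle is the verification that after pullback by $\pi$ the hypotheses of the local theorems are genuinely satisfied at \emph{every} point of $U$, not merely generically: one must check that making the single function $\Delta_A$ (together with the finitely many other distinguished functions) simultaneously normal crossings by one sequence of blowings-up is enough, i.e. that no further invariant of $A$ needs to be resolved. This is where the algebraic strength of Theorem \ref{thm:normaltheorem} pays off: because that theorem requires only $\Delta_A$ to be a monomial times a unit (and nothing about the coefficients of $A$ individually, nor about differences of eigenvalues separately), resolving $\Delta_A$ — and, for the sharper conclusions, the last nonzero coefficient of $P_A$ — suffices, and one does not need the heavier monomialization of ideals generated by minors used in \cite{KP}, \cite{rainer2011}, \cite{rainer2013}. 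A secondary, purely bookkeeping point is to choose $\Omega$ and shrink it so that $\pi$ is proper over a neighborhood of the compact set $K$, which is routine in each category. With these in hand the proof is a direct concatenation of resolution of singularities and Theorem \ref{thm:henselian}.
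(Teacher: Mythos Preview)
Your proposal is correct and follows essentially the same route as the paper: resolve $\Delta_A$ (cases (a), (b)) or $\Delta_{A^*A}$ (case (c)) by the appropriate resolution of singularities theorem, then apply Theorem~\ref{thm:henselian} locally on $U$. Your write-up is more explicit than the paper's (you spell out why normality survives in the real geometric setting, why the local rings satisfy \eqref{P1}--\eqref{P3}, and that for the sharper conclusions one should simultaneously resolve the last nonzero coefficient of $P_A$ or $P_{A^*A}$), but the underlying argument is the same.
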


\begin{proof}
 It suffices to apply the resolution of singularities, 
 \cite{Hi} in the Nash case, \cite{BM1} in the analytic case, \cite{BM} in the quasianalytic case, to $f:=\D_A$ in the cases (a) and (b), and to 
 $f:=\D_{A^*A}$  in the case (c). Then $f$ becomes normal crossing, that is locally a monomial times a unit, and we conclude by
  Theorem \ref{thm:henselian}.  
\end{proof}

\begin{rmk}
In the analytic and Nash cases, if $A \in Mat_{m,d}  (\O_M)$ then there exists a globally defined, locally finite  composition of blowings-up with nonsingular centers  $\pi: \widetilde M \to M$, such that 
(a), (b) and (c) are satisfied. Indeed this follows from  \cite{Hi} and \cite[Section 13]{BM2}. 
\end{rmk}

Now we consider the complex geometric case. Let  $M$ a complex manifold belonging either to the complex analytic category, or the complex Nash category. We denote by $\O_M$ the sheaf of complex-valued regular 
(in the given category) functions on $M$.  
Let $p\in M$ and let $f\in \O_{M,p}$. As in the real case, we say that $f$ is \emph{normal crossings at $p$} if there is a system of local complex coordinates at $p$ such that $f$ is equal, in these coordinates, 
to a monomial times a unit. 

 \begin{theorem}\label{blowup:complex}
Let $M$ be a manifold defined in the complex analytic or Nash category.
 Let $A\in Mat_{m,d}  (\O_M)$. Then there exists a locally finite  composition of blowings-up with nonsingular centers  $\pi: \widetilde M \to M$, such that the following holds:\\
 For every $p\in \widetilde M$, there are an open neighborhood of $p$, $\mathcal U_p\subset \widetilde M$, and invertible matrices $V\in Mat_m(\O_{\widetilde M}(U_p))$, $U\in Mat_d(\O_{\widetilde M}(U_p))$, such that $V(A\circ\pi)U$ is rectangular diagonal.
\end{theorem}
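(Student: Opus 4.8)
The plan is to reduce the statement to the already-established real case, Theorem \ref{blowup}(c), by treating a complex manifold of complex dimension $n$ as a real manifold of real dimension $2n$ and applying resolution of singularities there. First I would apply resolution of singularities in the complex analytic (resp. complex Nash) category---\cite{Hi} in the Nash case, or its analytic counterpart---to the function $f := \D_{A^*A} \cdot \D_{AA^*} \in \O_M(M)$ (or, to have a single intrinsic choice, to the product of these discriminants; note $\D_{A^*A}$ and $\D_{AA^*}$ differ only by the factor $Z^{d-m}$ in their characteristic polynomials, so it suffices to rectilinearize $\D_{A^*A}$). This produces a locally finite composition of blowings-up with nonsingular centers $\pi : \widetilde M \to M$ such that, at every point $p \in \widetilde M$, the pullback $f \circ \pi$ is normal crossings: in suitable local complex coordinates $X = (X_1,\dots,X_n)$ centered at $p$ it equals a monomial times a unit.

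Next I would verify that the algebraic hypotheses needed to run Theorem \ref{thm:SVD} survive this pullback. The matrix $A \circ \pi$ has entries in $\O_{\widetilde M, p}$, which embeds into $\C[[X]]$, and $(A\circ\pi)^*(A\circ\pi) = (A^*A)\circ\pi$ precisely because $\pi$ is a holomorphic change of coordinates and thus commutes with the coefficientwise conjugation defining the adjoint $A^*$ (this is the key point that fails for a non-holomorphic change of variables, as the examples in Section \ref{sec:rectilinear} illustrate, but here it holds by construction). Consequently $\Delta_{(A\circ\pi)^*(A\circ\pi)} = \Delta_{A^*A}\circ\pi$ is a monomial times a unit at $p$, so the hypotheses of Theorem \ref{thm:SVD} (applied over the ring $\O_{\widetilde M, p} \subset \C[[X]]$, using Theorem \ref{thm:henselian} to know the analytic/Nash version holds) are met. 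Shrinking to an open neighborhood $\mathcal U_p$ of $p$ on which the resulting power series converge (analytic case) or are regular (Nash case), Theorem \ref{thm:SVD} furnishes unitary, hence invertible, matrices $V \in U_m(\O_{\widetilde M}(\mathcal U_p))$ and $U \in U_d(\O_{\widetilde M}(\mathcal U_p))$ with $V^{-1}(A\circ\pi)U$ rectangular diagonal. Since $V, U$ are unitary we have $V^{-1} = V^*$ and $U^{-1} = U^*$ have entries in the same ring, so replacing $V$ by $V^{-1}$ in the statement (or rewriting $V(A\circ\pi)U$ with $V$ unitary) gives exactly the asserted form.

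The main obstacle I anticipate is not in the matrix algebra---that is entirely delegated to Theorems \ref{thm:SVD} and \ref{thm:henselian}---but in making sure the invocation of Theorem \ref{thm:henselian} is legitimate in the complex geometric setting: one must know that $\O_{\widetilde M, p}$, the local ring of germs of complex analytic (resp. complex Nash) functions at $p$, satisfies properties \eqref{P1}, \eqref{P2}, \eqref{P3}. For the complex analytic case this is standard ($\C\{X\}$ is Henselian, contains $\C[X]$, and is a subring of $\C[[X]]$ closed under the relevant operations and under division by coordinates); for the complex Nash case one uses that the ring of complex algebraic power series $\C\lg X\rg$ is the Henselization of $\C[X]_{(X)}$ and enjoys the same stability properties. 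A secondary, purely bookkeeping point is the passage from the germ-level conclusion of Theorem \ref{thm:SVD} to an honest open neighborhood $\mathcal U_p$: in the analytic case this is a routine convergence argument, and in the Nash case the matrices $V, U$ are Nash by construction (their entries are algebraic power series, compare Remark \ref{rem_red}), so they are automatically defined and regular on a neighborhood of $p$. Finally, since $\pi$ is only locally finite rather than finite, no compactness is needed and the conclusion is stated pointwise on $\widetilde M$, matching Theorem \ref{blowup:complex} as phrased; one simply notes that the centers of the blowings-up are nonsingular and the composition is locally finite, which is exactly what resolution of singularities provides.
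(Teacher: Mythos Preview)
There is a genuine gap. Your central claim, that ``$(A\circ\pi)^*(A\circ\pi) = (A^*A)\circ\pi$ precisely because $\pi$ is a holomorphic change of coordinates and thus commutes with the coefficientwise conjugation defining the adjoint $A^*$'', is false, and you have the example in Section~\ref{sec:rectilinear} exactly backwards. The change of variables $Y_1=X_1+iX_2$, $Y_2=X_1-iX_2$ discussed there \emph{is} holomorphic, and the point of that example is precisely that such a complex coordinate change destroys normality because it does \emph{not} commute with the formal conjugation $a(X)\mapsto \overline a(X)$. In general $\overline{a\circ\pi}=\overline a\circ\overline\pi$, so the formal adjoint is preserved only when $\pi$ has real Taylor coefficients; holomorphy gives no such guarantee. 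A closely related problem appears already in your first step: you write $f:=\Delta_{A^*A}\in\O_M(M)$, but there is no such global holomorphic function. The pointwise adjoint $z\mapsto A(z)^*$ is anti-holomorphic, while the formal adjoint depends on a choice of local coordinates, so $\Delta_{A^*A}$ is not an intrinsic section of $\O_M$ that one can feed into complex resolution of singularities.

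The paper's proof makes essentially the opposite observation to yours. It does \emph{not} claim that the adjoint survives the passage to complex variables; rather, it notes that the identity $D=V^{-1}AU$ produced by Theorem~\ref{thm:SVD} is a formal identity among power series, so once local coordinates $X$ are fixed one may substitute complex values for the $X_i$ and still get a valid identity between holomorphic matrix-valued functions. The price, stated in the parenthetical, is that the formal unitarity $U^*U=\I$ (with $*$ the coefficientwise conjugate) no longer means that $U(x)$ is unitary for complex $x$; one retains only invertibility. This is exactly why the conclusion of Theorem~\ref{blowup:complex} asserts merely invertible $V,U$ rather than unitary ones, in contrast to Theorem~\ref{blowup}. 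Your argument, by insisting that $V,U$ remain unitary and that $(A\circ\pi)^*=A^*\circ\pi$, misses this point and cannot be repaired without rethinking what object is being resolved and what is actually preserved under holomorphic pullback.
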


\begin{proof}
Indeed in Theorem \ref{thm:SVD}, 
the indeterminates $X$ can be replaced by  complex variables (but here the matrices $U(X)$ and $V(X)$ are no longer unitary since the $X_i$ are complex variables). Therefore the proof of Theorem \ref{blowup:complex} is identical to the proof of Theorem \ref{blowup} cases (a) and (b).
\end{proof}

\section{The global affine case}
Let $U$ be an open set of $\R^n$. We denote by $\O(U)$ the ring of complex valued  Nash functions on $U$, i.e. the ring of real-analytic functions on $U$ that are algebraic over $\C[X_1,\ldots, X_n]$. 
For every point $x\in U$, we denote by $\O(U)_x$ the localization of $\O(U)$ at the maximal ideal defining $x$, i.e. the ideal $\m_x:=(X_1-x_1,\ldots, X_n-x_n)$. The completion of $\O(U)_x$, denoted by $\wdh\O_x$, depends only on $x$ and not on $U$ and is isomorphic to $\C[[X_1,\ldots, X_n]]$. The theorem below can be compared to Theorem 6.2 of \cite{KP}, but note that the latter one is only local.\\

\begin{theorem}\label{global_statement}
Let $U$ be a non-empty simply connected semialgebraic open subset of $\R^n$.
Let the matrix $A\in Mat_d(\O(U))$ be normal and suppose that $\Delta_A$ is 
normal crossings on $U$. Then:
\begin{itemize}
\item[i)]  the eigenvalues of $A$ are in $\O(U)$.  Let us denote by $\l_1$, \ldots, $\l_s$ these distinct eigenvalues;
\item[ii)]  there are  Nash vector sub-bundles $M_i$  of $\O(U)^d$ such that
$$\O(U)^d=M_1\oplus \cdots\oplus M_s ;$$
\item[iii)] for every $u\in M_i$, $Au=\l_iu$.
\end{itemize}
 
\end{theorem}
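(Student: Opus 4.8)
The plan is to leverage the local structure provided by Theorem \ref{thm:normaltheorem} (via Theorem \ref{thm:henselian}, since $\O(U)_x$ has completion $\C[[X]]$ and the relevant Nash objects descend) together with a patching/monodromy argument to globalize. First I would establish part (i): the characteristic polynomial $P_A(Z) \in \O(U)[Z]$ has discriminant $\Delta_A$ that is normal crossings, hence vanishes only on a hypersurface $D_A$; on the connected (indeed simply connected) open set $U \setminus D_A$ the eigenvalues are locally distinct Nash functions, so they extend to a finite covering of $U \setminus D_A$. Because $U$ is simply connected, this covering is trivial, so the eigenvalues are globally well-defined continuous functions on $U\setminus D_A$, Nash there; then I would use Theorem \ref{thm:normaltheorem} / \ref{thm:henselian} applied at each point of $D_A$ — where the local completed ring is $\C[[X]]$ and $\Delta_A$ is a monomial times a unit in suitable local coordinates — to see that each eigenvalue extends across $D_A$ as a local Nash function, matching the global continuous extension; hence the $\l_i \in \O(U)$. (One must check that the number $s$ of distinct eigenvalues is constant: again this follows from connectedness of $U$ together with the local normal-form statement, since the multiplicity structure is governed by the generalized discriminants which are monomials times units locally.)

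For parts (ii) and (iii), the eigenspace $M_i := \ker(A - \l_i \I_d)$ is, away from $D_A$, a Nash vector subbundle of $\O(U\setminus D_A)^d$ of constant rank (the multiplicity $\mu_i$), and the decomposition $\O(U\setminus D_A)^d = \bigoplus_i M_i$ holds there because the $\l_i$ are distinct. The content is to show these subbundles extend across $D_A$ as Nash subbundles of $\O(U)^d$. Locally at a point $p\in D_A$, Theorem \ref{thm:henselian} gives a unitary (orthogonal in the real case, but here we work over $\O(U)$ with complex values) $U(X) \in U_d(\widehat\O_p)$ conjugating $A$ to a diagonal matrix; grouping the diagonal entries according to the value of $\l_i$ produces local formal frames for each $M_i$. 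Because $\O(U)_p$ is Henselian and the construction in Lemma \ref{lem:SplitMat} is Nash (Remark \ref{rem_red}), these frames can be taken in $\O(U)_p$, not merely in the completion — so $M_i$ is locally a direct summand of $\O(U)_p^d$, hence a Nash subbundle near $p$. Patching the locally defined $M_i$ over $U$ is unobstructed because on overlaps they agree with the intrinsically defined $\ker(A-\l_i\I_d)$ (the $\l_i$ being now global functions); thus $M_i$ is a globally defined Nash subbundle of $\O(U)^d$, and $\O(U)^d = M_1\oplus\cdots\oplus M_s$ follows from the pointwise statement together with the projections $p_i = \prod_{j\ne i}(A-\l_j)/(\l_i-\l_j)$, which are globally defined elements of $Mat_d(\O(U))$ once the $\l_i$ are, and which are idempotents summing to $\I_d$. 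Property (iii) is then immediate from $M_i = \Im p_i \subseteq \ker(A-\l_i\I_d)$.

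The main obstacle is the globalization of the eigenvalues and eigenprojections across the discriminant locus $D_A$: the simple-connectedness hypothesis is exactly what kills the monodromy of the eigenvalue covering over $U\setminus D_A$, but one still needs to argue carefully that the globally defined continuous branches are genuinely Nash on all of $U$, i.e. that the local Nash extensions furnished by Theorem \ref{thm:henselian} (a priori only formal/local) glue with the global continuous data. I expect this to require the statement that a continuous function on $U$ that is Nash on $U\setminus D_A$ and whose germ at each point of $D_A$ agrees with a Nash germ is itself Nash — which follows from uniqueness of analytic continuation and the fact that $D_A$ is nowhere dense — together with the observation that the normal-crossings hypothesis guarantees, at each point, local coordinates in which $\Delta_A$ is a monomial times a unit, so that Theorem \ref{thm:henselian} applies. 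A secondary subtlety is that $\O(U)$ need not be Noetherian issues aside, a "Nash vector subbundle of $\O(U)^d$" must be interpreted as a finitely generated projective (equivalently, locally free) $\O(U)$-submodule that is a direct summand; verifying this globally reduces, via the global idempotents $p_i$, to the statement that $\Im p_i$ is projective, which is automatic.
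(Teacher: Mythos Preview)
Your overall strategy is close to the paper's, but there is a genuine error in your argument for the direct-sum decomposition. The Lagrange-type projections $p_i = \prod_{j\ne i}(A-\l_j\I_d)/(\l_i-\l_j)$ are \emph{not} globally defined elements of $Mat_d(\O(U))$: the scalars $\l_i-\l_j$ vanish precisely on components of the discriminant locus $D_A$, so $1/(\l_i-\l_j)\notin\O(U)$. That the $\l_i$ are global Nash functions does not help; the extension of the $p_i$ across $D_A$ is exactly the nontrivial content of part (ii), so invoking it here is circular. Your earlier patching argument via local frames furnished by Theorem~\ref{thm:henselian} and Remark~\ref{rem_red} is on the right track and, if carried through, would give the direct sum locally and hence globally without the $p_i$.

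There is also a confusion in your treatment of (i): $U\setminus D_A$ need not be simply connected, nor even connected, when $D_A$ is a real hypersurface (remove a coordinate hyperplane from a ball). You try to trivialize a covering of $U\setminus D_A$ using simple connectedness of $U$, which is a non sequitur. The paper's argument is cleaner: Theorem~\ref{thm:henselian} shows that the $s$ distinct eigenvalue \emph{germs} are Nash at \emph{every} point of $U$, including points of $D_A$, so they already define an unramified degree-$s$ covering of $U$ itself, which simple connectedness of $U$ trivializes. For (ii)--(iii) the paper's route is also different in flavor: it defines $M_i=\Ker(\l_i\I_d-A)$ as an $\O(U)$-module, uses flatness of localization and completion together with Theorem~\ref{thm:normaltheorem} to get the decomposition in each completed stalk, then invokes Noetherianity of $\O(U)$ (a theorem of Risler---so your remark that $\O(U)$ ``need not be Noetherian'' is in fact false here) and the description of its maximal ideals to deduce that the cokernel $\O(U)^d/(M_1+\cdots+M_s)$ vanishes globally. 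Local freeness of $M_i$ is then obtained via Nakayama's lemma and a rank argument, rather than by exhibiting explicit idempotents.
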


\begin{proof}
We have that  $P_A\in\O(U)[Z]$. For every $x\in U$ and $Q(Z)\in\O(U)[Z]$ let us denote by $Q_x$  the image of $Q$ in $\wdh \O_x[Z]$. By assumption ${\D_A}_x$ is normal crossings for every $x\in U$.

By Theorem \ref{thm:henselian}, locally at every point of $U$, the eigenvalues of $A$ can be represented by Nash functions, and therefore, since $U$ is simply connected, they are well-defined global functions of $\O(U)$. Let us denote these distinct eigenvalues by $\l_1$,\ldots, $\l_s$ for $s\leq d$.  We set
$$M_i=\Ker(\l_i\I_d-A)\ \ \text{ for } i=1,\ldots, s$$
where $\l_i\I_d-A$ is seen as a morphism defined on $\O(U)^d$. Thus the $M_i$ are sub-$\O(U)$-modules of $\O(U)^d$. 

For an $\O(U)$-module $M$, let us denote by $M_x$ the $\O(U)_x$-module $\O(U)_xM$, and by $\wdh M_x$ the $\wdh \O_x$-module $\wdh\O_x M$.  
By flatness of $\O(U)\lgw \O(U)_x$ and $\O(U)_x\lgw \wdh\O(U)_x$, we have that ${M_i}_x$ is the kernel of $\l_i\I_d-A$ seen as a morphism defined on $\O(U)_x^d$, and $\wdh{M_i}_x$ is the kernel of $\l_i\I_d-A$  seen as a morphism defined on $\wdh \O_x^d$ (see \cite[Theorem 7.6]{Ma}).\\
By Theorem  \ref{thm:normaltheorem}, for every $x\in U$, we have that
$$\wdh{M_1}_x\oplus\cdots\oplus \wdh{M_s}_x=\wdh \O_x^d.$$
Now let us set
$$N=\O(U)^d/(M_1+\cdots+M_s).$$
By assumption for every $x\in U$, we have that $\wdh N_x=0$.
Because $\O(U)$ is Noetherian (see \cite[Th\'eor\`eme 2.1]{Ris}), $\O(U)_x$ is Noetherian. So since $N$ is finitely generated the morphism  $N_x\lgw \wdh N_x$ is injective (see \cite[Theorem 8.11]{Ma}). Therefore $N_x=0$ for every $x\in U$.\\
Thus for every $x\in U$, $\Ann(N)\not\subset \m_x$ where
$$\Ann(N)=\{f\in\O(U)\ \mid\ fN=0\}$$
is the annihilator ideal of $N$. Since the maximal ideals of $\O(U)$ are exactly the ideals $\m_x$ for $x\in U$ (see \cite[Lemma 8.6.3]{BCR}),  $\Ann(N)$ is not a proper ideal of $\O(U)$, i.e. $\Ann(N)=\O(U)$,  and $\O(U)^d=M_1\oplus \cdots\oplus M_s$. \\

For every $x$, we have that ${M_i}_x/\m_x{M_i}_x$ is a $\C$-vector space of dimension $n_{i,x}$ that may depend on $x$ (this vector space is included in the eigenspace of $A(x)$ corresponding to the eigenvalue $\l_i(x)$ - this inclusion may be strict since there may be another $\l_j$ such that $\l_j(x)=\l_i(x)$). So by Nakayama's Lemma every set of $n_{i,x}$ elements of $M_i$ whose images form a $\C$-basis of ${M_i}_x/\m_x{M_i}_x$ is a minimal set of generators of ${M_i}_x$. 
Therefore they make also a minimal set of generators of the $\Frac(\O(U))$-vector space $\Ker(\l_i \I_d-A)$ where $\l_i\I_d-A$ is seen as a morphism defined on $(\Frac (\O(U)))^d$. In particular $n_{i,x}$ is the dimension of   the $\Frac(\O(U))$-vector space $\Ker(\l_i \I_d-A)$ and it is independent of $x$. \\
Now let $u_1$, \ldots, $u_{n_i}\in M_i$ be vectors whose images in ${M_i}_x/\m_x{M_i}_x$ form a basis of ${M_i}_x/\m_x{M_i}_x$. We can write
$$u_j=(u_{j,1},\ldots, u_{j,d})$$ where the $u_{j,k}$ are Nash functions on $U$. So there is a $n_i\times n_i$ minor $\d$ of the matrix $(u_{j,k})$ that does not vanish at $x$, 
and hence there is a neighborhood $V$ of $x$ in $U$ such that for every $\tilde x\in V$, $\d(\tilde x)\neq0$ and the images of $u_1$, \ldots, $u_{n_i}$ form a basis of ${M_i}_{\tilde x}/\m_{\tilde x}{M_i}_{\tilde x}$. We define the morphism of $\O(V)$-modules
$$\Phi :\O(V)^d\longrightarrow M_i(V)$$
 by $\Phi(a_1,\ldots, a_d)=\sum_{j=1}^{n_i} a_ju_j$. Since the $u_j$ generate the stalks ${M_i}_x$ for every $x\in V$, $\Phi_x:\O(V)_x^d\lgw {M_i}_x$ is an isomorphism for every $x\in V$ so $\Phi$ is an isomorphism by \cite[Proposition II.1.1]{Ha}. Hence $M_i$ is a Nash sub-bundle of dimension $n_i$.

\end{proof}


{}

\end{document}